\documentclass[12pt,a4paper]{article}

\usepackage[utf8]{inputenc}
\usepackage{amsmath}
\usepackage{amsfonts}
\usepackage{amssymb}
\usepackage{amsthm}
\usepackage[colorlinks]{hyperref}
\hypersetup{
  citecolor={cyan}
}
\usepackage[left=2cm,right=2cm,top=2cm,bottom=2cm]{geometry}
\usepackage{xcolor}
\usepackage[all,cmtip]{xy}
\usepackage{tikz}
\usetikzlibrary{decorations.markings}
\usepackage{enumerate}
\usepackage{array}

\newcolumntype{C}[1]{>{\centering\arraybackslash}m{#1}}


\newcommand{\U}{\mathrm{U}}
\newcommand{\SU}{\mathrm{SU}}

\newcommand{\Hom}{\mathrm{Hom}}

\newcommand{\mft}{\mathfrak{t}}

\newcommand{\RR}{\mathbb{R}}
\newcommand{\CC}{\mathbb{C}}
\newcommand{\ZZ}{\mathbb{Z}}

\newtheorem{thm}{Theorem}[section]
\newtheorem{prop}[thm]{Proposition}
\newtheorem{cor}[thm]{Corollary}
\newtheorem{lem}[thm]{Lemma}

\theoremstyle{definition}
\newtheorem{rem}[thm]{Remark}
\newtheorem{defn}[thm]{Definition}
\newtheorem{ex}[thm]{Example}

\begin{document}

\title{GKM manifolds are not rigid}
\author{Oliver Goertsches\footnote{Philipps-Universit\"at Marburg, email:
goertsch@mathematik.uni-marburg.de}, Panagiotis Konstantis\footnote{Universit\"at zu Köln,
email: pako@mathematik.uni-koeln.de}, and Leopold
Zoller\footnote{Ludwig-Maximilians-Universit\"at M\"unchen, email: leopold.zoller@mathematik.uni-muenchen.de}}

\maketitle

\begin{abstract} We construct effective GKM $T^3$-actions with connected stabilizers on the total spaces of the two $S^2$-bundles over $S^6$ with identical GKM graphs. This shows that the GKM graph of a simply-connected integer GKM manifold with connected stabilizers does not determine its homotopy type. We complement this by a discussion of the minimality of this example: the homotopy type of integer GKM manifolds with connected stabilizers is indeed encoded in the GKM graph for smaller dimensions, lower complexity, or lower number of fixed points. Regarding geometric structures on the new example, we find an almost complex structure which is invariant under the action of a subtorus. In addition to the minimal example, we provide an analogous example where the torus actions are Hamiltonian, which disproves symplectic cohomological rigidity for Hamiltonian integer GKM manifolds.
\end{abstract}

\section{Introduction}
There is a rich and successful history of describing manifolds with torus actions through discrete
objects. Most famously this is illustrated by the bijective correspondence between toric varieties
and their fans or more specifically the correspondence between symplectic toric manifolds and Delzant
polytopes. In \cite{MR2431667} Masuda proved that the equivariant isomorphism type of a toric
manifold, when considered as a complex variety, is determined by its integral equivariant
cohomology ring (or equivalently by its GKM
graph \cite{MR4038723}). The more ambitious cohomological rigidity problem, in its original form posed by
Masuda--Suh in \cite[Problem 1]{MR2428362}, asks if a toric
manifold is determined up to (nonequivariant) homeomorphism by its integral cohomology ring.  While
the problem in this form is unsolved as of today, several variants have been considered in the
literature. For instance, when restricting attention to certain (generalized) Bott
manifolds, it was seen to be true, see \cite{MR2962979}, Sections 2.1 and 2.2 and references therein.

Motivated by the success in the toric case it is natural to look for similar results in a generalized setting.
Prominent candidates are the classes of quasitoric manifolds and torus manifolds which were also
considered in the light of the cohomology rigidity problem in \cite{MR2928539, MR2846908, MR3030690} (note that the latter fails for torus manifolds, see \cite[Example 3.4]{MR2428362}). The
main object of study in the present article is the class of so-called (integer) GKM manifolds,
named after Goresky, Kottwitz and MacPherson \cite{MR1489894}, which in particular generalizes toric manifolds. These are
compact manifolds with vanishing odd-degree (integral) cohomology, equipped with an action of
a compact torus, whose fixed point set is finite, and whose one-skeleton is a union of invariant
$2$-spheres. To such actions
one associates a labelled graph, its GKM graph, which encodes the one skeleton in a combinatoric
fashion, see Section \ref{sec:GKM}. Given a certain connectedness condition of the stabilizers, the GKM graph somewhat surprisingly encodes the entire integral equivariant
and nonequivariant cohomology as well as all characteristic classes of the manifold (see
\cite[Theorem 3.1 and Proposition 3.5]{MR4088417}). With regards to the previously mentioned rigidity problems,
the naive follow up question would be whether the GKM graph determines the (nonequivariant)
homotopy type, the homeomorphism type, or even the diffeomorphism type. Note that this kind of
question belongs in the simply-connected realm since one cannot hope to encode the fundamental
group in the GKM-graph (see \cite[Section 2]{MR3030690}) and furthermore the motivating
example of a toric manifold is also simply-connected.

The purpose of this article is to give a counterexample to these questions and to discuss edge cases, where results of the above type actually do hold. Our main result reads as follows

\begin{thm}\label{thm:einleitungmainthm}
On the total spaces of the two $S^2$-bundles over $S^6$ there exist GKM $T^3$-actions with identical GKM graph.
\end{thm}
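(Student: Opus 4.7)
The plan is to construct both $T^3$-actions explicitly and show they share a common GKM graph $\Gamma$. For the trivial bundle $S^2 \times S^6$, I would take the product of the standard coordinate-wise $T^3$-action on $S^6 \subset \CC^3 \oplus \RR$---which has two fixed points $N, S$ and three invariant $2$-spheres---with a rotation of $S^2$ pulled back to $T^3$ via a generic character $\beta \in \Hom(T^3, S^1)$ chosen so the product action is GKM. This yields four fixed points, indexed by pairs (pole of $S^6$, pole of $S^2$), and the GKM graph $\Gamma$ has two ``vertical'' fiber edges of weight $\pm\beta$ together with six ``horizontal'' section edges of weights $\pm\alpha_i$, $i=1,2,3$, where $\alpha_1, \alpha_2, \alpha_3$ are the standard weights on $\CC^3$.

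For the nontrivial $S^2$-bundle, I would realize it as the sphere bundle $S(V)$ of a rank-$3$ real vector bundle $V \to S^6$ whose classifying map generates $\pi_5(\SO(3)) \cong \ZZ/2$, and equip $V$ with a $T^3$-equivariant structure so that the fiber at each of $N$ and $S$ is the representation $\RR \oplus \CC_\beta$. Such a structure is built by equivariant clutching: cover $S^6$ by two $T^3$-invariant contractible neighborhoods $U_\pm$ of the poles, take the trivial equivariant bundles $U_\pm \times (\RR \oplus \CC_\beta)$ over them, and glue along the overlap by a $T^3$-equivariant map whose underlying non-equivariant class generates $\pi_5(\SO(3))$.

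Next, I would verify that the GKM graph of $S(V)$ equals $\Gamma$. The isotropy data at the four fixed points agree by construction. Moreover, the inclusion $S^2_i \hookrightarrow S^6$ of each invariant base sphere is null-homotopic (as $S^6$ is $5$-connected), so $V|_{S^2_i}$ is topologically trivial; in fact it is equivariantly trivial, because $T^3$-equivariant rank-$3$ real bundles over $S^2_i$ with the given rotation action split into their isotypic pieces---a trivial real line bundle and a complex line bundle of weight $\beta$---whose classes are uniquely determined by the matching isotropy representations at $N$ and $S$. Consequently the invariant $2$-spheres above each $S^2_i$ in $S(V)$ give the same configuration as in $S^2 \times S^6$, and the fiber spheres over $N, S$ obviously agree.

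The main obstacle is twofold: first, producing a $T^3$-equivariant representative of the generator of $\pi_5(\SO(3))$ in the required form, despite the equivariance constraints on the nontrivial-stabilizer strata of $S^5 \subset S^6$; and second, verifying that $S(V)$ is actually not diffeomorphic to $S^2 \times S^6$. Since both total spaces have the same integral cohomology ring (by the Serre spectral sequence with vanishing Euler class), the latter requires a finer invariant such as tangential Pontryagin data, a Postnikov $k$-invariant, or a higher cohomology operation.
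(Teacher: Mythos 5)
Your outline correctly identifies the common GKM graph (four vertices, six ``horizontal'' edges with the standard weights $\alpha_1,\alpha_2,\alpha_3$ and two ``vertical'' edges of weight $\beta$), and your graph-verification step (isotypical splitting of $V|_{S^2_i}$ into $\RR\oplus\CC_\beta$ with matching weights at the poles, hence equivariant triviality over each invariant base sphere) is essentially sound and parallels what the paper proves in Theorem \ref{thm:non-rigidity}. However, the step you yourself flag as ``the main obstacle'' -- producing a $T^3$-equivariant clutching function $S^5\to\SO(3)$ (equivariant for conjugation by the rotation representation of weight $\beta$, hence constrained to centralizers over the non-free strata of $S^5$) whose underlying class generates $\pi_5(\SO(3))\cong\ZZ_2$ -- is precisely where the entire content of the proof lies, and your proposal offers no argument for its existence. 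The paper solves exactly this problem, by a different route: it identifies the orbit map $f\colon S^6\to S^6/(K_1K_2)\cong S^4$ explicitly via iterated suspensions of the Hopf map (Lemma \ref{lem:S5nachS4}, Proposition \ref{prop:S6nachS4}), shows $f$ is an equivariant representative of the generator of $\pi_6(S^4)$, lifts the torus action to the projectivized quaternionic Hopf bundle $\CC P^3\to S^4$ via left quaternionic multiplication on $S^7$ (Proposition \ref{prop:s1actionons4}), and then pulls back along $f$; the identification of the one-skeleton rests on the observation that $f$ restricted to the one-skeleton of $S^6$ lands in the fixed sphere $(S^4)^{T^3}$ non-surjectively and is therefore equivariantly null-homotopic. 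Without an analogue of this construction your plan does not go through; note also that the weight $\beta$ is not freely ``generic'' in the paper's realization but comes out as $(1,-1,-1)$ from the explicit lift.

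A second, smaller issue: your proposed tools for distinguishing $S(V)$ from $S^2\times S^6$ (tangential Pontryagin data, cohomology operations) cannot succeed, since -- as the paper emphasizes -- any counterexample necessarily has identical integral cohomology ring and characteristic classes, these being determined by the GKM graph. The paper instead distinguishes the two total spaces by $\pi_5$: a long-exact-sequence argument for the pulled-back $\SU(2)$-bundle gives $\pi_5(X)=0$, while $\pi_5(S^2\times S^6)=\pi_5(S^2)=\ZZ_2$ (Lemma \ref{lem:pi5}). Strictly speaking this distinction is not needed for the literal statement of the theorem (the two bundles are distinct as bundles), but it is needed for the paper's main point that the GKM graph does not determine the homotopy type, so you should replace your suggested invariants by the homotopy-group computation.
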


While the two spaces in question have identical integral cohomology ring and characteristic classes
(as needs to be the case for any counterexample) they are not homotopy equivalent by Lemma
\ref{lem:pi5} below due to differing fifth homotopy groups. In particular this also shows that the
equivariant cohomological rigidity problem \cite[Theorem 1.1]{MR2431667} does not generalize to integer GKM manifolds. Regarding geometric structures, we find an almost complex structure which is invariant under a two dimensional subtorus (see Theorem \ref{thm:almostcomplex}):
\begin{thm} The total space of the nontrivial $S^2$-bundle over $S^6$ admits an almost complex structure invariant under a circle action (even an action of the two-dimensional torus) with exactly four fixed points.
\end{thm}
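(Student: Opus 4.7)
The plan is to restrict the GKM $T^3$-action furnished by Theorem~\ref{thm:einleitungmainthm} on the total space $M$ of the nontrivial $S^2$-bundle over $S^6$ to a generic two-dimensional subtorus $T^2 \subset T^3$, and then to construct a $T^2$-invariant almost complex structure. Since $\chi(M) = \chi(S^2)\chi(S^6) = 4$, the $T^3$-fixed set consists of exactly four points. I would first choose $T^2$ so that (i) no $T^3$-weight occurring in any isotropy representation restricts trivially to $T^2$, which keeps the $T^2$-fixed set at the same four isolated points, and (ii) the restricted weights at each fixed point remain pairwise non-collinear, so that each tangent space at a fixed point decomposes canonically as a sum of four $T^2$-invariant real $2$-planes. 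The forbidden subtori form a finite union of codimension-one subsets of the Grassmannian of $2$-planes in $\mathfrak{t}^3$, so such a generic $T^2$ exists.

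Next I would build the invariant almost complex structure by choosing, at each of the four fixed points, a sign for each of the four restricted weights; this declares each real $2$-plane in the isotropy decomposition to be a complex line and produces a $T^2$-invariant complex structure on each fixed-point tangent space. These local choices must be extended along the GKM one-skeleton: on each invariant $2$-sphere $\Sigma$ connecting two fixed points, the bundle $TM|_\Sigma$ splits as a sum of four equivariant complex line bundles determined by the weights at the two endpoints, and the fixed-point complex structures glue along $\Sigma$ precisely when the signs chosen on the weight of $\Sigma$ are compatible at both poles. The extension of the equivariant complex reduction of $TM$ from the one-skeleton to the full manifold is then an obstruction problem for the fibration $\U(4) \to \SO(8) \to \SO(8)/\U(4)$ whose fiber is a simply connected Hermitian symmetric space; exploiting that integer GKM manifolds have vanishing odd-degree cohomology and are equivariantly formal, one controls the higher obstructions and extends the reduction globally.

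The main obstacle is the combinatorial step: one must verify that the explicit GKM graph of the nontrivial $S^2$-bundle constructed in Theorem~\ref{thm:einleitungmainthm}, together with its axial function, admits a globally consistent choice of signs for its weights so that the four fixed-point complex structures glue along every invariant $2$-sphere. This is a finite check once the graph is written out explicitly. I expect the ample freedom in orienting the weights at the four fixed points --- combined with the fact that the underlying graph has only four vertices and few edges --- to make the compatibility solvable, at which point the theorem follows.
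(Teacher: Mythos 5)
Your proposal leaves exactly the load-bearing steps unproven. First, the sign-consistency check along the one-skeleton is only ``expected'' to work, not verified; this is not a harmless omission, since the paper shows that for the full $T^3$ \emph{no} invariant almost complex structure exists at all (no signed structure compatible with the GKM graph admits a connection), so compatibility questions of this type are genuinely delicate here and cannot be presumed solvable by counting degrees of freedom. Second, and more seriously, the extension of the equivariant complex reduction of $TX$ from the one-skeleton to all of $X$ is pure hand-waving: ``vanishing odd cohomology and equivariant formality control the higher obstructions'' is not an argument. With $H^{\mathrm{odd}}(X;\ZZ)=0$ the obstructions sit in even-degree groups with coefficients in the odd homotopy of $\SO(8)/\U(4)$, which do not vanish for free; in particular the top obstruction over the $8$-cells is precisely where existence of an almost complex structure on a closed $8$-manifold is a nontrivial characteristic-class condition, and demanding equivariance only makes the extension problem harder, not easier. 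A further structural issue: your ``generic'' choice of $T^2$ keeps the restricted data GKM-like, so the very connection-type obstruction that kills the $T^3$-invariant structure is at least potentially in play for your $T^2$ as well, and you never rule it out.

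For comparison, the paper's proof is constructive and avoids obstruction theory entirely, and its choice of subtorus is deliberately \emph{non}-generic: $T^2=\{(st,s^{-1},t)\}$ is the maximal torus of $\mathrm{G}_2$ acting on $S^6$. One splits $TX=\pi^*TS^6\oplus V_F$, pulls back the $\mathrm{G}_2$-invariant (octonionic) almost complex structure on $S^6$ to $\pi^*TS^6$ (this is what forces the restriction to $T^2$), and equips $V_F$ with the fiberwise $\SU(2)$-invariant structure on $\CC P^1$, which is well defined because the structure group of $X\to S^6$ is $\SU(2)$ and is $T^3$-invariant because the torus acts on the fibers through that structure group. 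Your generic-subtorus scheme has no substitute for this geometric input, so as it stands the proof has a genuine gap at both the combinatorial and the extension step.
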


This implies an example of a simply connected $8$-manifold with an invariant almost complex circle
action with four isolated fixed points which is not diffeormophic to $S^2 \times S^6$. In
\cite{MR3113861} Kustarev constructs many non-simply-connected $6$-manifolds with almost complex
circle actions fixing exactly two points. Crossing these examples with $S^2$ one obtains
non-simply-connected $8$-manifolds each endowed with an almost complex circle action with four
isolated fixed points. We remark that the significance of our example is due to the vanishing of
the odd integer cohomology. This forces that each even Betti number must be equal to one (remember
that the Euler characteristic must be equal to the number of isolated fixed points). As a further
remark we note that by \cite{2001.10699v1} our example is unitary cobordant to $S^2 \times S^6$
furnished with its standard almost complex structure.

While the above examples are of course not symplectic, slight modifications give rise to the
following

\begin{thm}
On the total spaces of two $\mathbb{C}P^1$-bundles over $\mathbb{C}P^3$ which are not homotopy equivalent, there exist Hamiltonian GKM $T^3$-actions with identical x-ray.
\end{thm}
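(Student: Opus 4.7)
The plan is to mirror the construction of Theorem \ref{thm:einleitungmainthm} in the Hamiltonian setting, replacing the base $S^6$ by $\mathbb{C}P^3$---a simply-connected Hamiltonian GKM $T^3$-manifold of dimension $6$ with four fixed points and GKM graph $K_4$---while keeping the fiber $\mathbb{C}P^1 \cong S^2$, now endowed with its standard Hamiltonian rotation.

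For the first manifold, I would take the trivial bundle $\mathbb{C}P^3 \times \mathbb{C}P^1$ with its product symplectic form and the standard Hamiltonian $T^4$-action; restricting to an appropriate three-dimensional subtorus $T^3 \subseteq T^4$ yields a Hamiltonian integer GKM action with connected stabilizers whose x-ray can be read off directly. For the second manifold, I would take a projectivization $\mathbb{P}(E)$ of a suitably chosen equivariant rank-$2$ complex vector bundle $E$ over $\mathbb{C}P^3$; this projectivization inherits a Hamiltonian torus action, and for an appropriate equivariant structure on $E$ together with the corresponding choice of $T^3 \subseteq T^4$, the resulting x-ray should match that of the trivial case---the precise Hamiltonian analogue of the matching of GKM graphs in Theorem \ref{thm:einleitungmainthm}.

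It then remains to show that the two total spaces are not homotopy equivalent. Since the x-ray of an integer GKM manifold with connected stabilizers determines its integer cohomology ring and all characteristic classes, one must invoke a non-cohomological invariant, just as the two $S^2$-bundles over $S^6$ are distinguished via $\pi_5$ in Lemma \ref{lem:pi5}. A natural candidate is an appropriate higher homotopy group computed via the long exact sequence of the fibration $\mathbb{C}P^1 \to \mathbb{P}(E) \to \mathbb{C}P^3$, or an invariant arising from the homotopy classification of the underlying principal $\mathrm{PU}(2)$-bundles over $\mathbb{C}P^3$.

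The main obstacle is the x-ray matching in the second paragraph: it requires an explicit calculation of the isotropy weights and moment-map images at all eight fixed points of $\mathbb{P}(E)$ and a careful choice of the subtorus $T^3$ so that these exactly coincide with those of the trivial product. The template is given by the construction in Theorem \ref{thm:einleitungmainthm}, but the additional Hamiltonian data---namely the moment-map images along fixed components of arbitrary subtori---must be tracked in parallel with the combinatorial graph data.
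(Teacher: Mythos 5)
Your high-level shape (projectivization of a rank-$2$ bundle over $\CC P^3$, x-ray matching with the trivial product, distinguishing the total spaces by a homotopy-theoretic invariant) is the right one, but the proposal has a genuine gap exactly where the content lies: the phrase ``a suitably chosen equivariant rank-$2$ complex vector bundle $E$'' is never made concrete, and the obvious candidates cannot work. Since the x-ray determines the GKM graph, hence the integral cohomology ring and all characteristic classes, any valid $E$ must be cohomologically indistinguishable from the trivial bundle; in particular it cannot be a nontrivial sum of equivariant line bundles (such a twist shifts the vertical weights at the fixed points, so the x-ray would \emph{not} match, and it is detected by Chern classes in any case). One is forced to a bundle detected only by torsion homotopy invariants, and the paper's choice is precisely this: $Y$ is the pullback of the already-constructed $X\to S^6$ along the equivariant collapse map $k\colon \CC P^3\to \CC P^3/\CC P^2\cong S^6$, equivalently the projectivization of $(f\circ k)^*\bigl(S^7\times_{\SU(2)}\CC^2\bigr)$ with $f$ a generator of $\pi_6(S^4)\cong\ZZ_2$. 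The mechanism for the x-ray matching is then that $k$ carries the one-skeleton of $\CC P^3$ into the one-skeleton of $S^6$, on which $f$ was shown to be equivariantly null-homotopic, so the restriction of $Y$ over the one-skeleton is equivariantly a product (Theorem \ref{thm:non-rigidity-symplectic}). Moreover, ``inherits a Hamiltonian action'' is not automatic for such a topological bundle: the paper needs Thurston's construction $\omega_F+C\pi^*\omega_B$, averaging of $\omega_F$ using the $\SU(2)$ structure group, simple-connectivity to get a momentum map, and then a separate argument that the momentum maps on $Y$ and $\CC P^1\times\CC P^3$ can be made to agree on the one-skeleton (translation, rescaling, flipping the sign of the fiber form, and shrinking the vertical edges by enlarging $C$). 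You flag this as ``the main obstacle'' but supply no mechanism for it.

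The non-homotopy-equivalence step is also underestimated. Appealing to the classification of the underlying principal $\mathrm{PU}(2)$-bundles does not suffice: inequivalent bundles can have homotopy equivalent (even diffeomorphic) total spaces, and distinguishing the bundles is essentially built into the construction anyway; what must be shown is that the \emph{total spaces} differ. The paper does this by computing $\pi_6(Y)=\ZZ_6$ versus $\pi_6(\CC P^1\times\CC P^3)=\ZZ_{12}$, and this is genuinely delicate: it uses that $k\circ p\simeq\Sigma^4 g$ for the Hopf map $g$ and the projection $p\colon S^7\to\CC P^3$, and then Lemma \ref{lem:fancyhomotopystuff}, i.e.\ the nonvanishing of $i\circ\Sigma^2 g\circ\Sigma^3 g\circ\Sigma^4 g$ in $\pi_7(\mathbb{H}P^2)$, proved via the EHP sequence and homotopy excision. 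Your ``appropriate higher homotopy group'' is the right instinct, but the existence of a torsion element that survives into $\pi_7(\mathbb{H}P^\infty)$ is exactly what must be proved, and without it neither the choice of bundle nor the distinction of total spaces is justified.
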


The notion of x-ray encodes in particular the GKM graph and is recalled in Section \ref{sec:symplecticstructures}. As a consequence, we show in  Proposition \ref{prop:nosymplecticrigidity} that symplectic cohomological rigidity, as defined in \cite{2002.12434v1}, does not hold for Hamiltonian integer GKM manifolds.


We also discuss positive answers to the rigidity question in special cases with respect to the
number of fixed points, dimension, and complexity of the action, where the complexity of
a $T^r$-action on $M^{2n}$ is defined as $n-r$. The situation for simply-connected integer GKM
manifolds with connected stabilizers, as known to the authors, is depicted in the table below. In
particular this shows that the counterexample from Theorem \ref{thm:einleitungmainthm} is simultaneously minimal with respect to all three
of the above parameters. Details are discussed in Section \ref{sec:minimality}.

Of the results in the table, only rows 4 and 6 are original to this paper. The first row follows
from \cite[Theorem p.\ 537]{MR268911} (alternatively, the arguments in \cite[Section 6]{MR3030690}
for $6$-dimensional torus manifolds are also applicable). The second row is a consequence of
\cite[Theorem 3.1]{MR4088417}.  The third row makes use of the generalized Poincar\'e conjecture
\cite{MR137124} and is, together with the fourth row, discussed at the end of Section \ref{sec:minimality}. Note that we do not
know if any exotic sphere admits an action of GKM type. In the remaining fifth row the first two
columns are deduced from \cite[Theorem
3.4]{MR3355120} and the third follows from \cite[Theorem 4.1]{MR3030690} (for the latter we remark
  that equivariant homeomorphisms preserve the GKM graph up to isomorphism).

\begin{center}
\begin{tabular}{c|cC{3cm}cC{4cm}}
type: & homotopy & homeomorphism & diffeomorphism & equivariant homotopy/homeo/diffeo\\ \hline
$\dim\leq 4$ &yes & yes &yes & yes\\
$\dim\leq 6$  &yes & yes& yes& ?\\
$2$ fixed points &yes & yes & ?& ?\\
$3$ fixed points &yes & yes & yes& ?\\
complexity $0$ &yes& yes& no& ?\\
else& no & no & no & no
\end{tabular}
\end{center}

\paragraph{Acknowledgements.} We are grateful
to Yael Karshon and Susan Tolman for valuable remarks on a previous
version of the paper, including a simplified construction of the action on the bundle $E$ in Section \ref{sec:construction}. We wish to thank Michael Wiemeler for several helpful comments.
This work is part of a project funded by the Deutsche Forschungsgemeinschaft (DFG, German Research Foundation) - 452427095.

\section{GKM manifolds}\label{sec:GKM}

In this paper we consider actions of compact tori $T$ on closed manifolds $M$. Given such an action, we denote its fixed point set by $M^T$, and its one-skeleton by $M_1=\{p\in M\mid \dim Tp\leq 1\}$.
\begin{defn}[\cite{MR1489894}]
We say that a $T$-action on a closed manifold $M$ is \emph{(integer) GKM} if
\begin{enumerate}
\item $H^{{\mathrm{odd}}}(M,\ZZ)=0$
\item $M^T$ is finite
\item $M_1$ is a finite union of $T$-invariant $2$-spheres.
\end{enumerate}
\end{defn}
For any GKM $T$-action on $M$, the quotient $M_1/T$ is homeomorphic to a graph, with one vertex for each fixed point, and one edge for each invariant $2$-sphere.  The tangent space of any such sphere in each of its two fixed points is an invariant subspace of the respective isotropy representation. We attach as a label to its edge in the graph the corresponding weight, considered as an element in $\ZZ_{\mft}^*/\pm 1$, where $\mft$ is the Lie algebra of $T$ and $\ZZ_{\mft}^*\subset \mft^*$ its weight lattice. This labelled graph will be called the \emph{GKM graph} of $M$.

If $M$ is equipped with a $T$-invariant almost complex structure, then the above-mentioned weights are naturally elements of $\ZZ_\mft^*$. In this setting, we attach instead to any oriented edge the corresponding weight at the initial vertex, and sometimes speak about the \emph{signed GKM graph} of the action.

\begin{ex} Any toric symplectic manifold is of GKM type. In this case, the GKM graph is given by the one-skeleton of the momentum polytope; the labels of the edges are given by the primitive vectors pointing in direction of its slopes.
\end{ex}
The (signed) GKM graph of an action is of relevance because under a certain connectedness assumption on the isotropy groups it encodes various topological properties of $M$ and the action, such as its (equivariant) cohomology algebra \cite[Corollary 2.2]{MR2790824}, and its (equivariant) characteristic classes \cite[Proposition 3.5]{MR4088417}.

\section{The minimal example}\label{sec:construction}

We construct two simply-connected GKM $T^3$-manifolds in dimension $8$ which are not (non-equivariantly) homotopy equivalent but have the same GKM graph. The underlying manifolds will be $S^2\times S^6$ as well as the total space of the non-trivial $S^2$-bundle over $S^6$.

\subsection{The construction}

We begin by recalling the construction and show that the total spaces of the two $S^2$-bundles over $S^6$ are indeed not homotopy equivalent. Principal $\SU(2)$-bundles over $S^6$ are given as pullbacks of the universal bundle $\SU(2)\rightarrow S^\infty\rightarrow \mathbb{H}P^\infty$ along maps $S^6\rightarrow \mathbb{H}P^\infty$. The space $\mathbb{H}P^\infty$ has a CW-structure with cells only in dimensions which are multiples of $4$. In particular its $7$-skeleton is $\mathbb{H}P^1=S^4$ and thus $S^4\hookrightarrow \mathbb{H}P^\infty$ induces an isomorphism on homotopy groups in dimensions up to $6$. We now use the nontrivial generator $f\colon S^6\rightarrow S^4$ of $\pi_6(S^4)\cong \mathbb{Z}_2$ to pull back the universal bundle. The resulting commutative diagram

\[\xymatrix{
\SU(2)\ar[r]\ar[d] &P\ar[r]\ar[d] & S^6\ar[d]\\
\SU(2)\ar[r] & S^\infty \ar[r]& \mathbb{H}P^\infty
}\]
induces a commutative diagram

\[\xymatrix{\cdots\ar[r]& \pi_6(S^6)\ar[r]\ar[d]^{f_*}& \pi_5(\SU(2)) \ar[r]\ar[d]^{\mathrm{id}}& \pi_5(P)\ar[r]\ar[d]& 0\ar[r]\ar[d]& \cdots\\
\cdots\ar[r]& \pi_6(\mathbb{H}P^\infty)\ar[r] & \pi_5(\SU(2))\ar[r]& 0 \ar[r]& \pi_5(\mathbb{H}P^\infty)\ar[r]&\cdots}\]
in which the rows are the long exact homotopy sequences of the bundles in question. By the choice of $f$, the map $f_*\colon \pi_6(S^6)\rightarrow \pi_6(\mathbb{H}P^\infty)=\pi_6(S^4)$ is surjective. Furthermore, the connecting homomorphism $\pi_6(\mathbb{H}P^\infty)\rightarrow \pi_5(\SU(2))$ is an isomorphism since $S^\infty$ is contractible. As a consequence the connecting homomorphism $\pi_6(S^6)\rightarrow \pi_5(\SU(2))$ is surjective. In combination with $\pi_5(S^6)=0$ this implies $\pi_5(P)=0$. Finally, we obtain a $\mathbb{C}P^1$-bundle $X\rightarrow S^6$ from $P$ by factoring out the fiberwise action of the circle $\mathrm{S}(\U(1)\times\U(1))$. Since this action is free we obtain a fiber bundle $S^1\rightarrow P\rightarrow X$. Again, via the long exact homotopy sequence, this implies the following well known

\begin{lem}\label{lem:pi5}
We have $\pi_5(X)=0$ whereas $\pi_5(S^2\times S^6)=\pi_5(S^2)=\mathbb{Z}_2$.
\end{lem}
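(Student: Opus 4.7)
The lemma breaks naturally into two independent claims, both of which reduce to standard applications of the long exact homotopy sequence.

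For the first claim, the plan is to exploit the circle bundle $S^1 \to P \to X$ that was just constructed. Its long exact homotopy sequence contains the segment
\[
\pi_5(S^1) \longrightarrow \pi_5(P) \longrightarrow \pi_5(X) \longrightarrow \pi_4(S^1).
\]
Since $S^1$ has vanishing higher homotopy groups, the outer terms are zero and so $\pi_5(X) \cong \pi_5(P)$. But $\pi_5(P) = 0$ has already been established in the discussion preceding the lemma (via the long exact sequence of $\SU(2) \to P \to S^6$, using that the connecting map $\pi_6(S^6) \to \pi_5(\SU(2))$ is surjective by the choice of $f$). This immediately gives $\pi_5(X) = 0$.

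For the second claim, I would first invoke the product decomposition $\pi_5(S^2 \times S^6) \cong \pi_5(S^2) \oplus \pi_5(S^6)$, and note that $\pi_5(S^6) = 0$ for dimensional reasons. Then the Hopf fibration $S^1 \to S^3 \to S^2$ yields an isomorphism $\pi_5(S^3) \xrightarrow{\cong} \pi_5(S^2)$ (again because the homotopy groups of $S^1$ vanish in this range). Finally, $\pi_5(S^3) = \mathbb{Z}_2$ is classical (the suspension of the Hopf map).

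There is no real obstacle here; the computations are textbook, and the nontrivial input, namely $\pi_5(P) = 0$, has already been secured in the paragraph above the lemma. The only thing worth emphasizing in the write-up is that the comparison between $X$ and $S^2 \times S^6$ is precisely what makes this lemma useful: it exhibits a homotopy invariant (the fifth homotopy group) that distinguishes the two $S^2$-bundles over $S^6$, which is the crux of the non-rigidity statement.
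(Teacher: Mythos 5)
Your argument is exactly the paper's: the paper deduces $\pi_5(X)=0$ from the long exact homotopy sequence of the circle bundle $S^1\to P\to X$ together with the previously established $\pi_5(P)=0$, and treats $\pi_5(S^2\times S^6)=\pi_5(S^2)=\mathbb{Z}_2$ as the well-known classical computation you spell out. One cosmetic remark: the generator of $\pi_5(S^3)\cong\mathbb{Z}_2$ is not a single suspension of the Hopf map (that lives in $\pi_4(S^3)$) but the composite $\Sigma g\circ\Sigma^2 g$ of two suspensions, as the paper itself notes in the proof of Proposition \ref{prop:S6nachS4}; this does not affect the correctness of your proof.
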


We construct a GKM action on $X$. To do this, we show that the map $f\colon S^6\rightarrow S^4$ in the construction above can be chosen to be $T^3$-equivariant with respect to certain actions. Then we lift the action on $S^4$ to an action of the restricted universal bundle which descends to the projectivization. In this way $X$ will be an equivariant pullback and will naturally carry a $T^3$-action.

Consider the $T^3$-action on $S^5\subset \mathbb{C}^3$ given by $(s,t,u)\cdot(v,w,z)=(su,tw,uz)$. The subcircles $K_1=\{(s,s,1)\in T^3\}$, $K_2=\{(s,1,s)\in T^3\}$, and $K_3=\{(s,1,1)\in T^3\}$ generate $T^3$.

\begin{lem}\label{lem:S5nachS4}
The orbit space $S^5/K_1$ is homeomorphic to $S^4\subset \mathbb{C}\oplus\mathbb{R}\oplus \mathbb{C}$ in a way such that the induced action of $K_2$ corresponds to $(s,1,s)\cdot(v,h,w)=(sv,h,sw)$ and $K_3$ acts as $(s,1,1)\cdot (v,h,w)=(sv,h,w)$ for $(v,h,w)\in S^4$.
\end{lem}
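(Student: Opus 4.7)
The plan is to construct an explicit continuous surjection $\Phi\colon S^5\to S^4\subset \mathbb{C}\oplus\mathbb{R}\oplus\mathbb{C}$ whose fibres are precisely the $K_1$-orbits, by combining the Hopf map on the $(v,w)$-slot with the identity on the $z$-slot. Writing $R=\sqrt{|v|^2+|w|^2}$, I would set
$$\Phi(v,w,z) = \left(\frac{2v\bar w}{R},\; \frac{|v|^2-|w|^2}{R},\; z\right),$$
extended by $\Phi(0,0,z)=(0,0,z)$ on the locus $R=0$. The identity $|2v\bar w|^2+(|v|^2-|w|^2)^2=(|v|^2+|w|^2)^2=R^4$ together with $R^2+|z|^2=1$ shows that $\Phi$ lands in $S^4$, and continuity at $R=0$ is forced by the bounds $|2v\bar w|\leq R^2$ and $\bigl||v|^2-|w|^2\bigr|\leq R^2$.

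The next step is to check $K_1$-invariance and to compute the induced $K_2$- and $K_3$-actions. A direct substitution shows that $2v\bar w/R$ transforms with weight one under $K_2$ and $K_3$ but is $K_1$-invariant, since the factors $s$ on $v$ and $\bar s$ on $\bar w$ cancel; that $(|v|^2-|w|^2)/R$ is invariant under all three circles; and that $z$ transforms with weight one under $K_2$ and is fixed by $K_1$ and $K_3$. Under the identification of the first $\mathbb{C}$-factor of $\mathbb{C}\oplus\mathbb{R}\oplus\mathbb{C}$ with the output of the Hopf map and the last $\mathbb{C}$-factor with $z$, these formulas match the ones asserted in the statement.

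It remains to promote $\Phi$ to a homeomorphism $S^5/K_1\cong S^4$. For this I would invoke the classical fact that the Hopf map $S^3\to S^2$, $(a,b)\mapsto (2a\bar b,|a|^2-|b|^2)$, is precisely the orbit map of the diagonal $S^1$-action. Restricting $\Phi$ to the $(v,w)$-sphere of radius $\sqrt{1-|z|^2}$ inside the slice $\{z=\text{const}\}$ gives a rescaled copy of the Hopf map, while for $|z|=1$ the fibre reduces to the single $K_1$-fixed point $(0,0,z)$. In both cases the fibre is exactly a $K_1$-orbit, so $\Phi$ descends to a continuous bijection $\bar\Phi\colon S^5/K_1\to S^4$, which is automatically a homeomorphism by compactness of $S^5$ and Hausdorffness of $S^4$.

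No step of this argument should be genuinely difficult; the main care is to write down the Hopf-type formula so that the three $K_i$-actions take their prescribed forms simultaneously, and to extend $\Phi$ continuously across the singular locus $R=0$.
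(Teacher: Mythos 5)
Your proof is correct. The formula $\Phi(v,w,z)=\left(2v\bar w/R,\ (|v|^2-|w|^2)/R,\ z\right)$ does land in $S^4$, extends continuously over $R=0$, is $K_1$-invariant with fibres exactly the $K_1$-orbits (on each slice $z=\mathrm{const}$ it is the rescaled Hopf map, whose fibres are the diagonal circles), and the weight computations for $K_2$ and $K_3$ give precisely the asserted actions; the compact-to-Hausdorff argument then upgrades the induced continuous bijection to a homeomorphism. The route differs from the paper's in execution: the paper realizes $S^5\cong\Sigma^2S^3$ via the suspension homeomorphism, identifies the $K_1$-action with the doubly suspended diagonal action, and then builds the identification $S^5/K_1\cong\Sigma^2\mathbb{C}P^1\cong S^4$ through an explicit chain of hemisphere/collapse/stretching maps, tracking equivariance through each step. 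Your single closed formula is cleaner and avoids these bookkeeping maps; in fact it is nothing but the double suspension of the standard Hopf map $(a,b)\mapsto(2a\bar b,|a|^2-|b|^2)$ written out in coordinates. What the paper's more laborious version buys is exactly this suspension structure made explicit: the identification of the orbit map with $\Sigma^2 g$ is what drives Proposition \ref{prop:S6nachS4} (that $f$ generates $\pi_6(S^4)$), and the explicit coordinate formula is reused in the proof of Theorem \ref{thm:non-rigidity} to locate the image of the one-skeleton. Since your $\Phi$ is that same double suspension (just with a different, standard identification $\mathbb{C}P^1\cong S^2$), nothing downstream is lost, but if you intended your lemma to feed into those later arguments you should record explicitly that $\Phi$ is the twofold suspension of the Hopf map.
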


\begin{proof}
Recall the suspension homeomorphism $\Sigma S^n\rightarrow S^{n+1}$ given by \[[(a_0,\ldots,a_n),t]\mapsto (\varphi(t)a_0,\ldots,\varphi(t)a_n,t),\]
where we set $\varphi(t)=\sqrt{1- t^2}$. Applying this twice, we see that the $K_1$-action corresponds to the doubly suspended diagonal action on $\Sigma^2 S^3$. In particular its orbit space is $\Sigma^2S^2=S^4$ as claimed. To prove the statement on the actions we need to recall the explicit form of the homeomorphism $S^3/S^1=\mathbb{C}P^1\cong S^2$.

Let $S^2_+=\{(v,h)\in S^2\subset \mathbb{C}\oplus \mathbb{R}~|~ h\geq 0\}$ be the upper hemisphere and denote by $A\subset S^2_+$ the equator. Then $S^2_+/A$ is homeomorphic to $\mathbb{C}P^1$ via $(v,h)\mapsto [v,h]$. On the other hand $S^2_+/A$ is homeomorphic to $S^2$ by the stretching map $(v,h)\mapsto (\alpha(h)v,2h-1)$, where $\alpha(h)\in [0,1]$ is a suitable scaling factor. Finally, note that for ${z}\in \mathbb{C}$ with $|z|\leq 1$ and $t=\mathrm{Im}(z)$, $s={\varphi(t)}^{-1}{\mathrm{Re(z)}}$, we have $\varphi(s)\varphi(t)=\varphi(\vert z\vert)$. Thus, under the double suspension map, when explicitly defined as above, the preimage of a point $(v,w,z)\in S^5$ is given by $\left[({\varphi(\vert z\vert)}^{-1}v,{\varphi(\vert z\vert)}^{-1}w),s,t\right]$ with $s,t$ as above. Piecing everything together we obtain
\begin{align*}
S^5\cong \Sigma^2S^3\rightarrow\Sigma^2\mathbb{C}P^1\cong \Sigma^2 S^2_+/A \cong \Sigma^2 S^2\cong S^4\subset \mathbb{C}\oplus\mathbb{R}\oplus \mathbb{C}
\end{align*}
given by
\begin{alignat*}{2}
(v,w,z)&\mapsto  [(\varphi(|z|)^{-1}v,\varphi(|z|)^{-1}w),s,t] && \in \Sigma^2S^3 \\
& \mapsto
[(\varphi(|z|)^{-1} \beta(w) v,\varphi(|z|)^{-1}|w|),s,t] && \in  \Sigma^2 S^2_+/A  \\
&\mapsto [\left(\alpha \left(\varphi(|z|)^{-1}|w|\right)\varphi(|z|)^{-1}\beta(w) v,2\varphi(|z|)^{-1}|w|-1\right),s,t]\quad && \in \Sigma^2 S^2 \\
&\mapsto \left(\alpha\left(\varphi(\vert z\vert)^{-1}\vert w\vert\right)\beta(w) v,2\vert w\vert-\varphi(\vert z\vert),z\right)&& \in S^4 ,
\end{alignat*}
where $\beta(w)$ is $1$ if $w=0$ and $\overline{w}/\vert w\vert$ otherwise. Since $\vert z\vert$ is invariant under multiplication with $S^1$, we see that the identification $S^5/K_1\cong S^4$ above is equivariant with respect to $S^1$-multiplication in the first and third coordinate.
\end{proof}

We now consider the suspended $T^3$-action on $S^6$.

\begin{prop}\label{prop:S6nachS4}
The orbit space $S^6/(K_1K_2)$ is homeomorphic to $S^4$ in a way that the orbit map $f:S^6\to S^6(K_1K_2)\cong S^4$ is $T^3$-equivariant with respect to the $T^3$-action on $S^4$ in which $K_1$ and $K_2$ act trivially, while $K_3$ acts via $(s,1,1)\cdot (v,w,h)=(sv,w,h)$. Furthermore, $f$ defines a generator of $\pi_6(S^4)$.
\end{prop}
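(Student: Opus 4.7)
The plan is to iterate Lemma \ref{lem:S5nachS4}. Since $K_1$ and $K_2$ commute, one computes $S^6/(K_1K_2)$ as the successive quotient $(S^6/K_1)/K_2$. Writing $S^6 = \Sigma S^5 \subset \mathbb{C}^3 \oplus \mathbb{R}$ with the $T^3$-action trivial on the suspension parameter $r$, the $K_1$-action commutes with the suspension, so Lemma \ref{lem:S5nachS4} gives
\[
  S^6/K_1 \;=\; \Sigma(S^5/K_1) \;\cong\; \Sigma S^4 \;=\; S^5 \subset \mathbb{C}\oplus\mathbb{R}\oplus\mathbb{C}\oplus\mathbb{R},
\]
with coordinates $(v,h,w,r)$. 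The residual actions are read off from the lemma: $K_2$ as $(s,1,s)\cdot(v,h,w,r) = (sv,h,sw,r)$ and $K_3$ as $(s,1,1)\cdot(v,h,w,r) = (sv,h,w,r)$.

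To quotient further by $K_2$, I would group the real coordinates into $z := h+ir$ so that this $S^5$ becomes the unit sphere in $\mathbb{C}\oplus\mathbb{C}\oplus\mathbb{C}$ with coordinates $(v,w,z)$; then $K_2$ acts diagonally on $(v,w)$ and trivially on $z$, which is precisely the setup of Lemma \ref{lem:S5nachS4}. A second application yields $S^5/K_2 \cong S^4 \subset \mathbb{C}\oplus\mathbb{R}\oplus\mathbb{C}$. Since $K_3$ only rotates the $v$-coordinate, and the explicit formula in the lemma shows that $v$ continues to represent (up to a positive real scalar) the first $\mathbb{C}$-factor of the new $S^4$, the induced $K_3$-action is rotation on a single $\mathbb{C}$-factor. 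After a harmless reordering of summands this matches the formula in the statement, while $K_1$ and $K_2$ act trivially by construction, giving the claimed $T^3$-equivariance.

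For the generator claim, observe that Lemma \ref{lem:S5nachS4} actually identifies each intermediate quotient with a double suspension of the Hopf map $\eta\colon S^3\to S^2$: the $K_1$-action on $S^5$ is the double suspension (in the real and imaginary parts of $z$) of the Hopf $S^1$-action on the subsphere $\{|v|^2+|w|^2=1\}$, so $S^5 \to S^5/K_1$ is $\Sigma^2\eta$ and consequently $S^6 \to S^6/K_1$ is $\Sigma^3\eta$. The analogous argument in regrouped coordinates shows that the second quotient $S^5 = S^6/K_1 \to S^4$ is again $\Sigma^2\eta$. Hence $f \simeq \Sigma^2\eta\circ \Sigma^3\eta$, which is the classical generator of $\pi_6(S^4) \cong \mathbb{Z}_2$: it is the double suspension of the generator $\eta\circ\Sigma\eta \in \pi_4(S^2)$ and remains a generator by Freudenthal, since the suspension $\pi_5(S^3)\to\pi_6(S^4)$ is an isomorphism of $\mathbb{Z}_2$'s.

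The main obstacle is coordinate bookkeeping: one must carefully verify that after regrouping $h$ and $r$ into the complex parameter $z$, the second quotient map is indeed a double suspension of $\eta$ with matching orientations, and that the induced $K_3$-action on the final $S^4$ agrees with the stated formula rather than some conjugate. Once the identifications are pinned down, both the $T^3$-equivariance of $f$ and the generator statement follow essentially automatically from the explicit double-suspension structure provided by Lemma \ref{lem:S5nachS4}.
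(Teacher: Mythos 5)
Your proposal is in substance the paper's own argument: iterate Lemma \ref{lem:S5nachS4} to identify both quotient maps with (suspensions of) the Hopf map, conclude $f\simeq \Sigma^2 g\circ\Sigma^3 g$, and invoke classical low-dimensional homotopy theory for the generator claim. The only genuine difference is cosmetic: you suspend first and quotient $S^6$ by $K_1$, then regroup the two real coordinates into a complex one so that the lemma applies verbatim to the residual $K_2$-action, whereas the paper stays on $S^5$, quotients by $K_1$ and then by $K_2$ (reading the second quotient as $\Sigma g$ directly from the residual action $(v,h,w)\mapsto(sv,h,sw)$), and suspends only at the very end; both routes give the same identification and the same $K_3$-equivariance, and the sign/orientation ambiguities you worry about are irrelevant since $\pi_6(S^4)\cong\ZZ_2$. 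One point in your generator argument is misattributed, though: Freudenthal covers only the last suspension $\pi_5(S^3)\to\pi_6(S^4)$ (and there a priori only surjectivity, which suffices), but it does \emph{not} cover the step $\pi_4(S^2)\to\pi_5(S^3)$, which lies outside the Freudenthal range ($i=4>2n-2=2$); so "double suspension of the generator of $\pi_4(S^2)$ plus Freudenthal" does not by itself show nontriviality. The needed input is the classical fact that $\Sigma g\circ\Sigma^2 g$ generates $\pi_5(S^3)\cong\ZZ_2$ (equivalently, that the suspension $\pi_4(S^2)\to\pi_5(S^3)$ is injective), which is exactly what the paper cites (Hatcher, p.\ 475); starting from that fact, your conclusion follows as stated. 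So the argument is correct once you replace the first Freudenthal appeal by this citation (or an EHP-type argument), as the paper does.
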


\begin{proof}
As we have seen in the previous lemma, the projection $S^5\rightarrow S^5/K_1\cong S^4$ corresponds
to the double suspension $\Sigma^2 g$ of the Hopf map $g\colon S^3\rightarrow S^2$. It also follows from the lemma
that $S^4\rightarrow S^4/K_2\cong \Sigma \mathbb{C}P^1\cong \Sigma S^2\cong S^3$ can be
identified with $\Sigma g$ and can be chosen $K_3$-equivariant with respect to the action $(s,1,1)\cdot (v,w)=(sv,w)$ on
$S^3$. Thus $S^5\rightarrow S^5/(K_1K_2)\cong S^3$ is the composition $\Sigma g\circ \Sigma^2 g$
which is known to give a generator of $\pi_5(S^3)$, cf. \cite[p. 475]{MR1867354}. The suspension is
a generator of $\pi_6(S^4)$ by the Freudenthal suspension theorem and satisfies the equivariance
property as claimed in the lemma.
\end{proof}

Consider the quaternionic Hopf bundle
\[S^3\longrightarrow S^7\longrightarrow S^4\]
which is given by dividing out the quaternionic diagonal action by right multiplication on $S^7\subset \mathbb{H}^2$ and identifying the orbit space $\mathbb{H}P^1$ with $S^4$. The orbit map factors as \[S^7\rightarrow \mathbb{C}P^3\rightarrow \mathbb{H}P^1\cong S^4\] where the first map divides by complex diagonal multiplication from the right (viewing $\mathbb{C}$ as a subset of $\mathbb{H}$). The second map is a fiber bundle with fiber $\mathbb{C}P^1$, and as the quaternionic Hopf bundle is the restriction of the universal $\SU(2)$-bundle $S^\infty\to \mathbb{H}P^\infty$ to $\mathbb{H}P^1$, by construction $X$ is the pullback of $\mathbb{C}P^3\rightarrow S^4$ along $f$. We will now construct compatible actions on $\mathbb{C}P^3\rightarrow S^4$ which pull back to $X$.

On $S^7 \subset\mathbb{H}^2$ consider the $S^1$-action from the left defined by \[s\cdot (p,q)=(sp,sq)\]
where $p,q\in \mathbb{H}$. This commutes with the actions of the complex and quaternionic diagonals from the right. In particular $\mathbb{C}P^3\rightarrow S^4$ is equivariant with respect to the induced circle action.
\begin{prop}\label{prop:s1actionons4}
On $S^4\subset \mathbb{C}^2\oplus \mathbb{R}$ this action can be identified with $s\cdot(v,w,h)=(s^2v,w,h)$. On the fibers of $\mathbb{C}P^3\rightarrow S^4$ over the fixed points of $S^4$, the induced action is equivariantly diffeomorphic to the rotation $s\cdot (v,h)=(s^2v,h)$ on $S^2\subset \mathbb{C}\oplus \mathbb{R}$.
\end{prop}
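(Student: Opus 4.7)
\noindent\textbf{Proof plan for Proposition \ref{prop:s1actionons4}.} My plan is to use the standard explicit model of the quaternionic Hopf map $S^7\to S^4$ in order to transport the $S^1$-action to $S^4\subset\mathbb{H}\oplus\mathbb{R}=\mathbb{C}^2\oplus\mathbb{R}$. Concretely, I would use the formula
\[
\pi\colon S^7\longrightarrow S^4\subset \mathbb{H}\oplus\mathbb{R},\qquad (p,q)\longmapsto \bigl(2p\bar q,\ |p|^2-|q|^2\bigr),
\]
which is easily checked to land on the unit sphere, to factor through the right $\SP(1)$-action, and therefore to realise the identification $\mathbb{H}P^1\cong S^4$. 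Substituting $(sp,sq)$ immediately gives that the induced $S^1$-action on $S^4$ reads $s\cdot(w,h)=(sw\bar s,\,h)$.

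Next I would decompose a quaternion $w=x+yj$ with $x,y\in\mathbb{C}$ and use the defining relation $jz=\bar z j$ (which gives $j\bar s=sj$ for $s\in S^1$) to compute
\[
sw\bar s=sx\bar s+syj\bar s=x+sy(sj)=x+s^2y\cdot j.
\]
Hence in the splitting $S^4\subset \mathbb{C}\oplus\mathbb{C}j\oplus\mathbb{R}$ the action is $(x,y,h)\mapsto(x,s^2y,h)$, which after relabelling the two $\mathbb{C}$-factors (i.e.\ calling $v=y$ and $w=x$) is exactly the formula $s\cdot(v,w,h)=(s^2v,w,h)$ claimed in the first part.

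For the second statement, the fixed set on $S^4$ is precisely $\{v=0\}\cong S^2$; I would pick any fixed point, e.g.\ the one coming from $(p,q)=(1,0)$, and describe the fibre of $\mathbb{C}P^3\to\mathbb{H}P^1$ over it as the space of complex lines in the quaternionic line $V=(1,0)\mathbb{H}$, viewed as a right $\mathbb{C}$-module with basis $e_1=(1,0)$ and $e_2=(j,0)$. The left $S^1$-multiplication sends $e_1\mapsto(s,0)=e_1\cdot s$ and $e_2\mapsto(sj,0)=(j\bar s,0)=e_2\cdot\bar s$, so in coordinates $(\alpha,\beta)\in\mathbb{C}^2$ it reads $(\alpha,\beta)\mapsto(s\alpha,\bar s\beta)$. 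Projectivising and normalising by the overall scalar $s$ gives $[\alpha:\beta]\mapsto[s^2\alpha:\beta]$, which under the standard identification $\mathbb{C}P^1\cong S^2\subset\mathbb{C}\oplus\mathbb{R}$ is precisely the rotation $(v,h)\mapsto(s^2v,h)$. By $S^1$-equivariance of the bundle $\mathbb{C}P^3\to S^4$ and the fact that the entire $S^2$ of $S^4$-fixed points lies in one orbit of the residual $\SP(1)/S^1\cong S^2$-action on $\mathbb{C}P^3$, the same model applies on every other fibre over a fixed point.

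The calculations are all routine; the only genuinely delicate point is keeping track of the interplay between the \emph{left} $S^1$-multiplication (which defines the action) and the \emph{right} $\mathbb{C}$-multiplication (which defines the complex structure on the fibres and the Hopf projection), in particular the appearance of the factor of $2$ in the weight, which comes from the identity $j\bar s=sj$ when passing the complex scalar past $j$. Once this is handled carefully, both claims follow by direct substitution.
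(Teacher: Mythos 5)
Your identification of the base action and your computation on the fibre over $[1:0]$ are correct and essentially the paper's argument: the paper identifies $\mathbb{H}P^1\cong S^4$ via the hemisphere model and $s\cdot[q:h]=[sqs^{-1}:h]$ rather than the explicit formula $(p,q)\mapsto(2p\bar q,\,|p|^2-|q|^2)$, but both reduce to the conjugation $q\mapsto sq\bar s$ fixing $\mathbb{C}$ and rotating the $j$-$k$-plane at double speed, and your $\mathrm{diag}(s,\bar s)$ computation on the right-$\mathbb{C}$-module spanned by $1,j$ is exactly the weight-two rotation on the fibre. The gap is in your final step, where you pass from this one fibre to the fibres over all other fixed points. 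There is no ``residual $\SP(1)/S^1\cong S^2$-action on $\mathbb{C}P^3$'': $S^1$ is not normal in $\SP(1)$, so $\SP(1)/S^1$ is not a group and the right $\SP(1)$-action on $S^7$ does not descend to $S^7/S^1=\mathbb{C}P^3$; moreover, the right $\SP(1)$-action covers the identity on $\mathbb{H}P^1=S^4$, so it could not move the fibre over $[1:0]$ to the fibre over a different fixed point in any case. Likewise, $S^1$-equivariance of the bundle only says that each fibre over a fixed point is preserved by the action; it does not transport your model from one such fibre to another.

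The gap is easily closed, and the paper's proof shows one way: do the computation uniformly over an arbitrary fixed point $[v:w]$ with $v,w\in\mathbb{C}$, not both zero. The map $q\mapsto[vq:wq]_{\mathbb{C}}$ identifies $S^3/S^1$ (quotient by right complex multiplication) with the fibre over $[v:w]$, and since $v,w$ are complex they commute with $s$, giving $sqS^1\mapsto[vsq:wsq]_{\mathbb{C}}=s\cdot[vq:wq]_{\mathbb{C}}$; hence every such fibre carries left $S^1$-multiplication on $S^3/S^1$, which is precisely your $\mathrm{diag}(s,\bar s)$ computation, i.e.\ the weight-two rotation. Alternatively, if you prefer a symmetry argument, use left multiplication by unitary matrices with complex entries, $\U(2)\subset\SP(2)$: these commute with the left $S^1$-scalar action and with right quaternionic (hence complex) scalars, so they descend to bundle automorphisms of $\mathbb{C}P^3\to S^4$ commuting with the $S^1$-action, and they act transitively on the fixed sphere $\{[v:w]\mid v,w\in\mathbb{C}\}\subset\mathbb{H}P^1$; that is the legitimate replacement for the transitivity claim you invoked.
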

\begin{proof}
We recall the identification $\mathbb{H}P^1\cong S^4$. Denote by $S^4_+$ the hemisphere $\{(q,h)\in S^4\subset \mathbb{H}\oplus \mathbb{R}\mid h\geq 0\}$ and by $A$ the equator at height $0$. Now the map $(q,h)\mapsto [q: h]$ induces a homeomorphism $S_+^4/A\cong \mathbb{H}P^1$. For $h\in\mathbb{R}$ we have $s\cdot [q:h]=[sq:sh]=[sqs^{-1}:h]$. The conjugation action $s\cdot q=sqs^{-1}$ leaves the complex plane fixed and rotates the $j$-$k$-plane at double speed. Thus with the correct identification $S_+^4/A\cong S^4$ this is indeed the action described in the proposition. The fixed points are the points $[v:w]\in \mathbb{H}P^1$ with $v,w\in \mathbb{C}$.
For $v,w\in \mathbb{C}$, not both zero, the map $q\mapsto [vq:wq]_\mathbb{C}$ induces a diffeomorphism from $S^3/S^1$ onto the fiber over $[v:w]$, where the quotient $S^3/S^1$ is by quaternionic multiplication with complex numbers from the right, and the notation $[\cdot:\cdot]_\mathbb{C}$ describes points in the quotient of $S^7$ by the complex diagonal action from the right. Using that $v,w\in \mathbb{C}$, the fiber inclusion $S^3/S^1\rightarrow \mathbb{C}P^3$ satisfies
\[ sqS^1\mapsto [vsq: wsq]_\mathbb{C}=s\cdot[vq:wq]_\mathbb{C},\]
and is thus equivariant with respect to quaternionic multiplication with $S^1$ from the left on $S^3/S^1$. This can be identified with the action from the proposition.
\end{proof}
The $S^1$-action on the bundle $\mathbb{C}P^3\rightarrow S^4$ has kernel generated by $-1$. We divide by the corresponding subgroup to make the action effective. Then we pull back the $S^1$-actions along the homomorphism $\psi\colon T^3\rightarrow S^1$, $(s,t,u)\rightarrow st^{-1}u^{-1}$. By Proposition \ref{prop:S6nachS4} the resulting $T^3$-action on $S^4$ is the one with respect to which the map $f\colon S^6\rightarrow S^4$ is equivariant. The space $X$ is thus the pullback of the $T^3$-equivariant bundle $\mathbb{C}P^3\rightarrow S^4$ along the $T^3$-equivariant map $f\colon S^6\rightarrow S^4$ and thus inherits a $T^3$-action.

For later purposes we will need

\begin{rem}
Since the total space of $\CC P^{3} \to S^4$ is obtained by dividing out
  a subcircle action of $\mathrm{SU}(2)$, we have that the $\CC P^{1}$-bundle $\CC P^{3} \to S^4$
  has structure group $\textrm{SU}(2)$ inherited from the principal $\textrm{SU}(2)$-bundle $S^7 \to
  S^4$. Therefore the structure group of the pullback $\CC P^{1}$-bundle $X \to S^6$ is $\SU(2)$ as well.

\end{rem}

\begin{rem}\label{rem:structure group}
The map $f$ may be approximated $T^3$-equivariantly by a smooth map. This does not affect the pullback, so we obtain a smooth structure on $X$ with respect to which the $T^3$-action is smooth.
\end{rem}

\begin{thm}\label{thm:non-rigidity}
The $1$-skeleton of the $T^3$-action on $X$ is $T^3$-equivariantly homeomorphic to the $1$-skeleton of the product $T^3$-action on $S^2\times S^6$ which acts on $S^2$ via $(s,t,u)\cdot(v,h)=(st^{-1}u^{-1}v,h)$ and on $S^6$ via $(s,t,u)\cdot(v,w,z,h)=(sv,tw,uz,h)$ for $v,w,z\in\mathbb{C}$, $h\in\mathbb{R}$.
\end{thm}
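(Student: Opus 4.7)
The plan is to describe the $1$-skeleton explicitly on both sides, check that their GKM graphs agree, and then assemble a $T^3$-equivariant homeomorphism sphere by sphere. Write $\Sigma_1,\Sigma_2,\Sigma_3\subset S^6$ for the three invariant coordinate $2$-spheres, with weights $(1,0,0),(0,1,0),(0,0,1)$. On $S^2\times S^6$, the fixed points are the four products of poles, and (since the four isotropy weights at each fixed point are pairwise non-parallel) the invariant $2$-spheres are exactly the two copies $S^2\times\{N\}$ and $S^2\times\{S\}$, carrying weight $(1,-1,-1)$, together with the six $2$-spheres $\{*\}\times\Sigma_i$ for $*\in(S^2)^{T^3}$ and $i=1,2,3$. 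For $X$ the approach is to use the $T^3$-equivariant bundle $p\colon X\to S^6$: over $N$ and $S$ the entire fiber $\CC P^1$ is an invariant $2$-sphere of weight $(1,-1,-1)$ (from Proposition~\ref{prop:s1actionons4} after dividing by the kernel $\{\pm 1\}$ and pulling back via $\psi$), contributing two fixed points each, while over interior points of $\Sigma_i$ the two-dimensional stabilizer acts on the fiber effectively through an $S^1$-quotient and hence fixes two points; these assemble into two $T^3$-invariant sections of $p^{-1}(\Sigma_i)\to\Sigma_i$ carrying the weight of $\Sigma_i$. This produces four vertices and eight invariant $2$-spheres with the same weight data as in $S^2\times S^6$.

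The main obstacle is to determine how the six sections connect the four fixed points of $X$: for each $i$ the two sections over $\Sigma_i$ could join the fixed points over $N$ to those over $S$ either ``in parallel'' or ``crossed'', and the two possibilities produce nonisomorphic graphs. The key observation is that the effective $S^1$-action on $\CC P^3$ has two fixed components, both diffeomorphic to $\CC P^1$, each of which is mapped homeomorphically by the bundle projection $\CC P^3\to S^4$ onto the fixed $2$-sphere of the $S^1$-action on $S^4$. Hence for every $i$ the two invariant sections over $\Sigma_i$ are precisely the $f$-pullbacks of these two fixed components; in particular one section always joins the ``first-component'' fixed point of $p^{-1}(N)$ to that of $p^{-1}(S)$, while the other joins the two ``second-component'' fixed points. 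After relabeling the vertices, this reproduces exactly the GKM graph of $S^2\times S^6$ described above.

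With both GKM graphs matched, the desired equivariant homeomorphism is built sphere by sphere: any two invariant $2$-spheres carrying the same unsigned weight with a prescribed identification of their two fixed points are conjugate by an equivariant homeomorphism, so one can choose compatible such homeomorphisms on corresponding pairs and glue them along the common fixed points.
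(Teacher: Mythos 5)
Your argument is correct, but it takes a genuinely different route from the paper. The paper observes that $f$ maps the one-skeleton $A\subset S^6$ into the $T^3$-fixed $2$-sphere $(S^4)^{T^3}$ without covering it, so $f|_A$ is (automatically equivariantly) null-homotopic; by homotopy invariance of equivariant pullbacks this trivializes the whole restricted bundle, $p^{-1}(A)\cong A\times F_x$ with the diagonal action, and the statement about the one-skeleton follows in one stroke, with no need to analyze adjacencies or to glue sphere by sphere. You instead never use the null-homotopy: you compute the one-skeleton of $X$ directly from stabilizers, and resolve the crucial ``parallel versus crossed'' ambiguity via the structure of the $S^1$-fixed locus of $\CC P^3$, which is indeed two disjoint $\CC P^1$'s (in homogeneous coordinates $\{z_1=z_3=0\}$ and $\{z_0=z_2=0\}$), each projecting homeomorphically onto the fixed $2$-sphere in $S^4$; pulling these back along $f|_{\Sigma_i}$ produces exactly your two invariant sections, and since each section stays inside one fixed component, the graph is forced to be parallel. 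This buys a very concrete picture of the invariant $2$-spheres (the paper's trivialization buys more, namely equivariant triviality of all of $p^{-1}(A)$, and avoids your final gluing step). A few small points you should make explicit to close the argument: that the one-skeleton of $X$ lies in $p^{-1}(A)$ and that points of the fibers over $\Sigma_i^\circ$ not fixed by the $2$-dimensional stabilizer $H_i$ have orbits of dimension $2$ (so the two $H_i$-fixed points per fiber are really all of the one-skeleton there); that $\psi(H_i)$ is the full circle, which is what guarantees $f(\Sigma_i)$ lies in the $S^1$-fixed sphere of $S^4$ and hence that Proposition \ref{prop:s1actionons4} applies to the relevant fibers; and, in the gluing step, that all eight spheres on both sides are equivariantly standard rotation spheres (the polar fibers by Proposition \ref{prop:s1actionons4}, the sections because they project equivariantly homeomorphically onto the $\Sigma_i$), that they pairwise intersect only in fixed points, and that since the weights are only defined up to sign one may have to compose a sphere-wise identification with an equivariant conjugation flip to realize the prescribed matching of poles. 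All of these are routine verifications, so your proposal stands as a valid alternative proof.
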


This has the following immediate
\begin{cor}
On $X$ and $S^2\times S^6$ there exist GKM $T^3$-actions with isomorphic GKM graphs.
\end{cor}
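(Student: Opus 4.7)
The corollary is essentially bookkeeping once Theorem \ref{thm:non-rigidity} is in hand. My plan is to verify the three defining conditions of a GKM action on both $X$ and $S^2\times S^6$, and then observe that an equivariant homeomorphism of the one-skeletons automatically transports the labelled graph data.

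The only axiom requiring genuine work is the vanishing of odd integral cohomology. For $S^2\times S^6$ this is immediate from the K\"unneth formula. For $X$ I would apply the Serre spectral sequence to the bundle $S^2\to X\to S^6$; since $S^6$ is simply connected the coefficient system is trivial, and the $E_2$-page is concentrated in bidegrees $(p,q)$ with $p\in\{0,6\}$ and $q\in\{0,2\}$. All such bidegrees are of even total degree, so for degree reasons every differential vanishes and $H^{\mathrm{odd}}(X;\mathbb{Z})=0$. (Alternatively, one could appeal to the fact that both $S^2$-bundles over $S^6$ have the same integral cohomology ring.)

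For the remaining two axioms I would use $M^T\subset M_1$ to reduce both finiteness of the fixed point set and the structure of $M_1$ to a statement about the one-skeleton. On $S^2\times S^6$ a direct inspection of the explicit product action recorded in Theorem \ref{thm:non-rigidity} shows that $M^T$ consists of four isolated points and that $M_1$ is a finite union of invariant $2$-spheres. For $X$ the same conclusion is then read off from Theorem \ref{thm:non-rigidity}, which provides an equivariant homeomorphism between the one-skeleton of $X$ and that of $S^2\times S^6$. The mild point to check is that this equivariant homeomorphism, combined with the smooth structure from Remark \ref{rem:structure group}, really lets us identify the fixed set of $X$ with four isolated points and the one-skeleton with a finite union of invariant spheres — this is automatic since the property of being a fixed point or an invariant sphere is preserved under equivariant homeomorphism.

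It remains to compare the GKM graphs. By definition each graph is the quotient $M_1/T$ (a vertex per fixed point, an edge per invariant $2$-sphere), decorated by the weight of the isotropy representation on the tangent space of the corresponding sphere, taken in $\mathbb{Z}_{\mathfrak{t}}^*/\pm 1$. An equivariant homeomorphism sends invariant $2$-spheres to invariant $2$-spheres and preserves their stabilizers; since the label on an edge is determined up to sign by the stabilizer of the corresponding sphere (which is the kernel of the weight, a codimension-one subtorus), the labels on both sides agree. The main potential obstacle would have been if the weights were not determined by the stabilizer, but working modulo $\pm 1$ eliminates this issue, and the isomorphism of labelled graphs follows.
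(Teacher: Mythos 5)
Your proposal is correct and follows the same route as the paper, which simply records the corollary as immediate from Theorem \ref{thm:non-rigidity}: you are just spelling out the routine verifications (vanishing odd cohomology via K\"unneth resp.\ the Serre spectral sequence, and the fact that an equivariant homeomorphism of one-skeletons preserves fixed points, invariant $2$-spheres, and their stabilizers, hence the labels in $\ZZ_{\mft}^*/\pm 1$). Your observation that the edge label is recovered up to sign from the generic stabilizer of the sphere is exactly the point that makes the transport of the labelled graph automatic, and it is the same fact the paper uses when it notes that equivariant homeomorphisms preserve the GKM graph up to isomorphism.
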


The corresponding GKM graph is given by
\begin{center}
\begin{tikzpicture}

\node (a) at (0,0)[circle,fill,inner sep=2pt] {};
\node (b) at (6,0)[circle,fill,inner sep=2pt]{};
\node at (3,1) {$(1,0,0)$};
\node at (3,0.3) {$(0,1,0)$};
\node at (3,-1) {$(0,0,1)$};

\draw (a) to[very thick, in=160, out=20] (b);
\draw (a) to[very thick] (b);
\draw (a) to[very thick, in=200, out=-20] (b);

\node (c) at (0,4)[circle,fill,inner sep=2pt] {};
\node (d) at (6,4)[circle,fill,inner sep=2pt]{};
\node at (3,5) {$(1,0,0)$};
\node at (3,4.3) {$(0,1,0)$};
\node at (3,3) {$(0,0,1)$};

\draw (c) to[very thick, in=160, out=20] (d);
\draw (c) to[very thick] (d);
\draw (c) to[very thick, in=200, out=-20] (d);

\draw (c) to[very thick] (a);
\draw (d) to[very thick] (b);

\node at (7.2,2) {$(1,-1,-1)$};
\node at (-1.2,2) {$(1,-1,-1)$};

\end{tikzpicture}
\end{center}

\begin{proof}[Proof of Theorem \ref{thm:non-rigidity}.]
Denote by $A\subset S^6$ the $1$-skeleton of the $T^3$-action. The $1$-skeleton of $X$ is certainly contained in $p^{-1}(A)$, where $p\colon X\rightarrow S^6$ is the projection. By naturality of the pullback $p^{-1}(A)$ is the pullback of $\mathbb{C}P^3\rightarrow S^4$ along $f|_A$. Recall that $f$ was defined as the suspension of the orbit map $S^5\rightarrow S^5/(K_1K_2)$. The space $A$ is the suspension of the $1$-skeleton $B\subset S^5$ which consists of the three disjoint circles containing those elements $(v,w,z)\in S^5$  where two coordinates are zero. As derived in Lemma \ref{lem:S5nachS4}, the map $S^5\rightarrow S^5/K_1\cong S^4\subset \mathbb{C}\oplus\mathbb{R}\oplus\mathbb{C}$ can be explicitly described as
\[(v,w,z)\mapsto \left(\alpha\left(\varphi(\vert z\vert)^{-1}\vert w\vert\right)\beta(w) v,2\vert w\vert-\varphi(\vert z\vert),z\right).\]
From this it follows that under $S^5\rightarrow S^5/(K_1K_2)\cong S^3$, the space $B$ maps to three distinct points which are fixed by $K_3$. Thus the suspended map sends $A$ to three joined lines in $(S^4)^{K_3}$. From the description of the $K_3$-action (see also Proposition \ref{prop:S6nachS4}) we deduce that $ (S^4)^{K_3}=(S^4)^{T^3}\cong S^2$. As $f|_A$ is not surjective onto this $S^2$ it is homotopic within $S^2$ to the constant map at some point $x\in S^2$. This homotopy is automatically equivariant because $S^2$ is fixed under $T^3$. By homotopy invariance of the pullback, $p^{-1}(A)$ is $T^3$-equivariantly isomorphic to the diagonal action on $A\times F_x$ where $F_x$ denotes the fiber of $\mathbb{C}P^3\rightarrow S^4$ over $x$.
In combination with Proposition \ref{prop:s1actionons4} it follows that the 1-skeleton of $X$ has the desired form.
\end{proof}

\subsection{Almost complex structures}\label{subsec:acs}
The manifold $X$ does not admit a $T^3$-invariant almost complex structure as no structure of a signed GKM graph that is compatible with the GKM graph of $X$ can admit a connection in the sense of \cite{MR1823050} (to see this, note that
transport along a horizontal edge via a connection can not map the other two horizontal edges onto themselves or one another, so they would both need to map to the single adjacent vertical edge). However we now argue that there is an almost complex structure on $X$ that is invariant under a $2$-dimensional subtorus. We want to stress that this restricted action will not be GKM.

 Recall that $X$ is a $T^3$-equivariant $\CC P^{1}$-bundle over $S^6$, where $T^3$ acts on $S^6$ in the standard way $(s,t,u)\,\cdot \,(v,w,z,h)=
  (sv,tw,uz,h)$ in $S^6 \subset \CC^3 \oplus \RR$.  It is well-known that $S^6$
  admits an almost complex structure that is invariant unter an action of the compact Lie group
  ${\mathrm{G}}_2$. The action of a maximal torus $T^2\subset {\mathrm{G}}_2$ on $S^6$ can be
  identified as a subaction of our $T^3$-action, namely as that of the subtorus
  $T^2:=\{(st,s^{-1},t)\}\subset T^3$, see e.g.\ \cite[Example 1.9.1]{MR1823050}. Now, the tangent bundle $TX$ decomposes as $TX = \pi^* TS^6 \oplus V_F$, where $V_F$ is the subbundle consisting of the tangent spaces of the fibers of $X\rightarrow S^6$. On $\pi^*TS^6$ we obtain a $T^2$-invariant almost complex structure by pulling back the $T^2$-invariant almost complex structure from $S^6$. For $V_F$ consider a fiber
  of $X$ over $S^6$ which can be identified with $\CC P^{1}$. We choose an $\textrm{SU}(2)$-invariant almost complex structure on $\CC P^{1}$, which defines an almost complex structure on the fiber. Since
  the structure group is $\textrm{SU}(2)$ this definition does not depend on the identification
  of the fiber with $\CC P^{1}$, cf. Remark \ref{rem:structure group}. This defines an
  almost complex structure on $V_F$. Moreover the $T^3$-action leaves this almost complex
  structure invariant. To see this recall that the $T^3$-action is induced by a circle action on $S^7$ which commutes
  with the action of $\textrm{SU}(2)$ of the principal bundle (see the paragraph before Proposition \ref{prop:s1actionons4}). In particular this means that
  the circle action preserves the fibers of $\CC P^{3} \to S^4$ and the transformations are given
  by elements of the structure group $\textrm{SU}(2)$. Thus the circle action respects the almost
  complex structure on the fibers and consequently so does the pullback $T^3$-action on $V_F$. Altogether one obtains
  a $T^2$-invariant almost complex structure on $X$ by restricting the $T^3$-action on $V_F$ to
  $T^2$.
Now, we arrive at the following theorem, which gives an example of an almost complex
circle action on an $8$-manifold with four isolated fixed points such that the odd cohomlogy
vanishes and which is not diffeomorphic to
$S^2 \times S^6$.

\begin{thm}\label{thm:almostcomplex} The manifold $X$, which is not diffeomorphic to $S^2\times S^6$, admits an almost complex structure invariant under a circle action (even an action of the two-dimensional torus) with exactly four fixed points.
\end{thm}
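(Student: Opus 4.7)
The construction of the almost complex structure is essentially carried out in the paragraph preceding the theorem statement, so the plan is to collect those ingredients and then verify the remaining two quantitative assertions: the fixed point count is exactly four, and $X \not\cong S^2\times S^6$.

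First I would briefly summarize the construction: the splitting $TX=\pi^*TS^6\oplus V_F$ reduces the problem to equipping each summand with a $T^2$-invariant almost complex structure, where $T^2=\{(st,s^{-1},t)\}\subset T^3$. On $\pi^*TS^6$ one pulls back the $\mathrm{G}_2$-invariant almost complex structure on $S^6$, using that $T^2$ is a maximal torus of $\mathrm{G}_2$ acting in the standard way on $S^6\subset\CC^3\oplus\RR$. On $V_F$ one uses that the structure group of $X\to S^6$ is $\SU(2)$ (by the remark following Proposition \ref{prop:s1actionons4}) together with the $\SU(2)$-invariance of the standard almost complex structure on $\CC P^1$, which makes the fiberwise structure well-defined and $T^3$-invariant; restricting to $T^2$ then gives the required invariance.

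The key remaining step is the fixed point count. The $T^2$-action on $S^6$ is given by $(s,t)\cdot(v,w,z,h)=(stv,s^{-1}w,tz,h)$, and inspection of the weights $st$, $s^{-1}$, $t$ (which are pairwise independent characters of $T^2$) shows that the only $T^2$-fixed points on $S^6$ are the two poles $(0,0,0,\pm 1)$. Above each pole, the fiber is a $\CC P^1$ on which $T^2$ acts via the composition $T^2\hookrightarrow T^3\xrightarrow{\psi} S^1$, where $\psi(s,t,u)=st^{-1}u^{-1}$; this composition sends $(s,t)\mapsto s^2$. By Proposition \ref{prop:s1actionons4}, the induced circle action on each fiber over a pole is equivariantly diffeomorphic to a nontrivial rotation of $S^2$, which has exactly two fixed points. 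Thus the total fixed point set of the $T^2$-action on $X$ consists of $2\cdot 2=4$ isolated points.

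Finally, the non-diffeomorphism with $S^2\times S^6$ is immediate: Lemma \ref{lem:pi5} shows $\pi_5(X)=0$ while $\pi_5(S^2\times S^6)=\ZZ_2$, so the two spaces are not even homotopy equivalent. I do not anticipate a serious obstacle here; the only mildly delicate point is making sure that the $T^2$-invariance on $V_F$ really follows from $\SU(2)$-invariance of the fiberwise structure, but this is precisely what the preceding paragraph establishes, so the proof amounts to assembling these observations.
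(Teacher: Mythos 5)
Your proposal is correct and follows essentially the same route as the paper: the paper's proof is exactly the construction in the paragraph preceding the theorem (the splitting $TX=\pi^*TS^6\oplus V_F$, the $\mathrm{G}_2$-invariant structure on $S^6$ restricted to $T^2=\{(st,s^{-1},t)\}$, and the $\SU(2)$-invariant fiberwise structure via the structure group), combined with Lemma \ref{lem:pi5} for the non-diffeomorphism. Your explicit verification that the $T^2$-fixed set consists of exactly four points (two poles of $S^6$, with $\psi$ restricting to $(s,t)\mapsto s^2$ giving a nontrivial rotation on each fiber over them) is a correct spelling-out of a count the paper asserts without detail, and it matches the GKM graph of the $T^3$-action.
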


\section{Minimality of the example}\label{sec:minimality}

In this section we observe that our example is optimal with regards to three properties: complexity, dimension, and number of fixed points.\\

\noindent {\bf Complexity.} For an effective $T^r$-action with non-empty fixed point set on a compact $2n$-dimensional manifold one has $r\leq n$ and the number $n-r$ is called the complexity of the action. Thus the previously constructed $T^3$-action on $X$ is of complexity $1$. Complexity $0$ manifolds of the above type are also known as torus manifolds. Theorem 3.4 in \cite{MR3355120} states that
the nonequivariant homeomorphism type of a simply-connected torus manifold $M$ with
$H^{{\mathrm{odd}}}(M,\ZZ)=0$ is encoded in the face poset of its orbit space together with
a function that associates to each face the corresponding isotropy group. If we denote by $T$ the
acting torus, then the face poset is -- as a partially ordered set -- equivalent to the closed orbit type stratification $\chi$ which is the collection of all connected components of $M^H$ where $H$ runs through all subgroups of $T$, ordered by inclusion. On this poset
one considers the function that associates the respective principal isotropy group to a connected component of
$M^H$. We stress that the homeomorphism type of $M$ is determined just by the combinatorial
data of the poset together with the function which associates the isotropy groups and does not
require specific knowledge of the isotropy submanifolds.

For GKM manifolds this information is encoded in the GKM graph: Let $M$ be a GKM $T$-manifold (not necessarily of complexity $0$) with GKM graph $\Gamma$. For a subgroup $H\subset T$, set $\Gamma^H\subset \Gamma$ to be the minimal subgraph that contains all edges whose weights -- understood as elements in $\Hom(T\rightarrow S^1)$ -- vanish on $H$. Define $\chi_\Gamma$ to be the set consisting of all connected components of subgraphs of the form $\Gamma^H$, partially ordered by inclusion.

\begin{lem}\label{lem:stratification}
There is bijection $\chi\rightarrow \chi_\Gamma$ which preserves the partial ordering such that for some $N\in \chi$ its principal isotropy group is given by the intersection of all kernels of the weights of all edges emanating from a single vertex in the corresponding element of $\chi_\Gamma$.
\end{lem}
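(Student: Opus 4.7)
I would define the bijection $\chi\to \chi_\Gamma$ explicitly by sending a connected component $N$ of $M^H$ to the subgraph $\Gamma_N\subset \Gamma$ with vertex set $N\cap M^T$ and edges given by the $T$-invariant $2$-spheres contained in $N$. The key observation to exploit is that for any $p\in N\cap M^T$, the tangent space $T_pN$ is the $H$-fixed subspace of $T_pM$, so its $T$-weights are the weights of edges of $\Gamma$ at $p$ that vanish on $H$. Furthermore, a $T$-invariant $2$-sphere $S\subset M$ through $p$ with tangent weight $\alpha$ is pointwise $H$-fixed iff $\alpha$ vanishes on $H$. Together these will give that the edges of $\Gamma_N$ at $p$ are exactly the edges of $\Gamma^H$ at $p$; in particular $\Gamma_N\subset \Gamma^H$.

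The main obstacle will be to show that $\Gamma_N$ is a full connected component of $\Gamma^H$. This splits into checking $N^T\neq \emptyset$ and that $\Gamma_N$ is connected. For the first I would appeal to equivariant formality: a GKM manifold is equivariantly formal, this passes to $M^H$ and hence to each connected component $N$, and an equivariantly formal $T$-manifold has nonempty $T$-fixed set by localization. For the second, I would reduce connectedness of $\Gamma_N$ to connectedness of $N_1=N\cap M_1$ and invoke the standard Morse-theoretic argument (applied to a generic $T$-invariant Morse function on $N$) which shows that in a connected equivariantly formal $T$-manifold whose one-skeleton consists of isolated fixed points and $T$-invariant $2$-spheres, any two fixed points are joined through this one-skeleton. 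To then promote $\Gamma_N$ to a full component of $\Gamma^H$, I would note that any edge of $\Gamma^H$ incident to a vertex of $\Gamma_N$ corresponds to a $T$-invariant $2$-sphere $S$ that is pointwise $H$-fixed and meets $N$, hence lies in $N$ by connectedness of $S$.

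The inverse map will send a connected component $\Gamma_0\subset \Gamma^H$ to the unique component of $M^H$ containing the connected union of $T$-invariant $2$-spheres corresponding to edges of $\Gamma_0$. Injectivity is immediate from disjointness of the fixed point sets of distinct components of $M^H$, surjectivity follows from the first part, and the order preservation is essentially tautological: $N\subset N'$ gives $\Gamma_N\subset \Gamma_{N'}$ directly, and conversely an inclusion of subgraphs forces an inclusion of the corresponding invariant $2$-sphere unions, hence of the connected components containing them. Finally, for the isotropy formula, I would pick any $p\in N^T$ and use the slice theorem to identify a $T$-invariant neighborhood of $p$ in $N$ with $T_pN=\bigoplus_e V_{\alpha_e}$ ($e$ ranging over edges of $\Gamma_N$ at $p$); a generic vector in this direct sum has $T$-stabilizer $\bigcap_e \ker\alpha_e$, which is therefore the principal isotropy of $N$.
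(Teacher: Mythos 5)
Your construction of the forward map $N\mapsto\Gamma_N$ and the identification of $\Gamma_N$ as a full connected component of $\Gamma^H$ follow essentially the paper's route (the paper quotes the fact that every component of $M^H$ again has vanishing odd integral cohomology and that GKM graphs are connected, where you argue via equivariant formality, localization and Morse theory; that difference is minor). The genuine gap is in injectivity, in the well-definedness of your inverse map, and in the converse direction of order preservation. Recall that $\chi$ consists of components of $M^H$ for \emph{all} subgroups $H\subseteq T$ simultaneously. Your claim that injectivity is ``immediate from disjointness of the fixed point sets of distinct components of $M^H$'' only treats two distinct components of the fixed point set of one and the same $H$. The actual content, which is what the second half of the paper's proof is devoted to, concerns a component $N$ of $M^H$ and a component $N'$ of $M^{H'}$ with $H\neq H'$: such $N$ and $N'$ need not be disjoint (they can share fixed points and invariant $2$-spheres), and a priori they could be distinct submanifolds with identical graphs, so disjointness of fixed point sets is neither available nor relevant. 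The same issue makes your inverse map ill-posed as stated: an element of $\chi_\Gamma$ may arise as a component of $\Gamma^H$ and of $\Gamma^{H'}$ at once, and you must show that the component of $M^H$ and the component of $M^{H'}$ it produces coincide. Likewise, in the converse order-preservation step, knowing that $\Gamma_N\subseteq\Gamma_{N'}$ and hence that $N$ and $N'$ both contain the sphere union of $\Gamma_N$ does not by itself give $N\subseteq N'$ when $N$ and $N'$ are components of fixed point sets of different subgroups.

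The repair is exactly the principal isotropy computation you postpone to the very end, deployed earlier, as in the paper: if $N$ and $N'$ correspond to the same element of $\chi_\Gamma$, then their principal isotropy groups agree and equal $U=\bigcap_e\ker\alpha_e$, the intersection of the kernels of the weights at a common vertex; in particular $H,H'\subseteq U$. Since the principal stratum is dense in $N$ and $M^U$ is closed, $N\subseteq M^U$, and because $N$ is a component of $M^H\supseteq M^U$ it is in fact a whole component of $M^U$; the same holds for $N'$. As $N$ and $N'$ contain the same fixed points they intersect, hence $N=N'$. The identical device (reading off $U$ from the graph and re-realizing each element of $\chi$ canonically as a component of $M^U$) also settles the converse order preservation. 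So the needed ingredient is present in your proposal, but it is not applied at the one step where the lemma is actually nontrivial.
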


\begin{proof}  The one-skeleton of
$M^H$ is encoded in the minimal (possibly disconnected) subgraph $\Gamma^H\subset \Gamma$. As the odd cohomology of (every component of) $M^H$ vanishes, see \cite[Lemma 2.2]{MR2283418}, the $T$-action on every component of $M^H$ is again GKM and, since GKM graphs are always connected, the connected components of $\Gamma^H$ are the GKM
graphs of the components of $M^H$. The principal isotropy type on a component of $M^H$ can be reconstructed in the way described in the lemma, since the weights at a fixed point determine the action on a neighbourhood. This also implies the injectivity of the correspondence: if $N\subset M^H$ and $N'\subset M^{H'}$ are connected components corresponding to the same element of $\chi_\Gamma$ then the principal isotropy groups of $N$ and $N'$ agree and are equal to some $U$ containing both $H$ and $H'$. Thus both are components of $M^U$ and have nonempty intersection, which implies $N=N'$.
\end{proof}

\noindent {\bf Dimension.} The main theorem in \cite{2003.11298v1} implies that for a simply-connected $6$-dimensional integer GKM manifold $M$ whose stabilizer groups satisfy a certain assumption which is satisfied if they are connected, the GKM graph encodes the nonequivariant diffeomorphism type of $M$.
In dimension $4$, even the equivariant diffeomorphism type is known to be encoded by \cite[Theorem p.\ 537]{MR268911} (or by the arguments for $6$-dimensional torus manifolds in \cite[Section 6]{MR3030690}).
Thus, our $8$-dimensional examples of GKM manifolds that are not homotopy equivalent but have the same GKM graph have the lowest possible dimension with this property.\\

\noindent {\bf Number of fixed points.} Our example has exactly four fixed points. We would like to argue that this is the minimal possible number of fixed points in our situation. For a simply-connected integer GKM manifold of positive dimension we always have at least two fixed points. In case of exactly two fixed points, the manifold has the integer homology of a sphere. In this situation, by collapsing the complement of a disc, one always finds a map to the standard sphere that induces an isomorphism in integer homology, which by the homology Whitehead theorem is a homotopy equivalence. From the (topological) generalized Poincar\'e
conjecture \cite{MR137124} we deduce that the manifold has to be homeomorphic to the standard sphere.

For three fixed points we have the following proposition:

\begin{prop}For a simply-connected integer GKM manifold $M$ in arbitrary dimension, with three fixed points, the GKM graph determines the nonequivariant diffeomorphism type of $M$.
\end{prop}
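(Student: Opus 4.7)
First I would extract the topology of $M$ from the combinatorial hypotheses. Three fixed points give $\chi(M)=3$, and together with $H^{\mathrm{odd}}(M;\ZZ)=0$, simple connectedness, and Poincar\'e duality this forces the Betti numbers to satisfy $b_0=b_n=b_{2n}=1$ with all others zero, where $\dim M=2n$ and $n$ is necessarily even. Unimodularity of the Poincar\'e duality pairing $H^n\otimes H^n\to H^{2n}$ implies that a generator of $H^n(M;\ZZ)$ squares to a generator of $H^{2n}(M;\ZZ)$, so $M$ has the integral cohomology ring of a projective plane. Because $M$ is simply-connected with free integral cohomology concentrated in dimensions $0$, $n$ and $2n$, it is homotopy equivalent to $S^n\cup_f e^{2n}$ for some attaching map $f$, and the cup-product structure forces $f$ to have Hopf invariant $\pm 1$. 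By Adams' Hopf invariant one theorem this restricts $n$ to $\{2,4,8\}$, i.e.\ $\dim M\in\{4,8,16\}$.

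In dimension $4$ the GKM graph, with three vertices each of valence $n=2$ and being connected, must be a triangle, and the acting torus has rank $2$, so $M$ is a simply-connected $4$-dimensional torus manifold. By the classification used for row $1$ of the table (\cite[Theorem p.~537]{MR268911}), $M$ is equivariantly diffeomorphic to $\CC P^2$ with its standard $T^2$-action, so its diffeomorphism type is determined by the GKM graph.

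In dimensions $8$ and $16$, $M$ is a closed simply-connected smooth integral cohomology $\mathbb{H}P^2$ or $\mathbb{O}P^2$. The diffeomorphism classifications of such manifolds---Eells--Kuiper in dimension $8$, and the work of Kramer and Stolz in dimension $16$---state that the diffeomorphism type is determined by a finite collection of characteristic numbers, essentially Pontryagin numbers together with a secondary $\mu$-type invariant that itself admits a polynomial expression in Pontryagin classes. Since the (nonequivariant) characteristic classes of a GKM manifold with connected stabilizers are determined by its GKM graph via \cite[Proposition 3.5]{MR4088417}, all of these invariants can be read off from the GKM graph, which therefore determines the diffeomorphism type of $M$.

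The hardest step will be the last one: one has to match the classifying invariants appearing in the Eells--Kuiper and Kramer--Stolz theorems to quantities explicitly computable from the graph, by combining \cite[Proposition 3.5]{MR4088417} with the ABBV localization formula for Pontryagin numbers. A small caveat is that \cite[Proposition 3.5]{MR4088417} requires connected stabilizers, a hypothesis implicit throughout this section but not repeated in the statement of the proposition.
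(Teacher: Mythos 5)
Your overall route is the same as the paper's: identify $M$ as an integral cohomology projective plane and then invoke the classification of manifolds which are like projective planes, observing that the classifying characteristic numbers are read off from the GKM graph via \cite{MR4088417}. The paper does exactly this, citing Corollary B of \cite{MR2355782}, which states that the diffeomorphism type is determined by the single Pontryagin number $p^2[M]$; in particular the detour through Adams' Hopf invariant one theorem and the separate treatment of dimension $4$ are not needed. Two points in your final step need repair, though. First, your justification in dimensions $8$ and $16$ --- that the Eells--Kuiper-type secondary invariant ``admits a polynomial expression in Pontryagin classes'' --- is not correct as a general statement (secondary invariants of closed manifolds are not characteristic numbers); what actually saves the argument is precisely the content of Corollary B of \cite{MR2355782}, namely that for this class of manifolds the Pontryagin number alone determines the diffeomorphism type. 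Second, a Pontryagin \emph{number} requires a choice of fundamental class, and the unsigned GKM graph (or an ABBV localization computation based on it) determines it only up to this orientation ambiguity; the paper closes this gap by noting that the ring $\ZZ[x]/\langle x^3\rangle$ carries a canonical orientation, since $x^2=(-x)^2$ singles out a preferred generator of the top cohomology, so the relevant number is well defined from the graph data alone. With these two corrections your argument coincides with the paper's proof; your caveat about connected stabilizers matches the standing assumption of that section of the paper.
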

\begin{proof}

  If the action has precisely three fixed points, then the integral homology of $M$ is $\ZZ$ in
  degrees $0$, $n$, and $2n$, where $2n$ is the dimension of $M$, and $0$ in all other degrees. In
  this case, by Corollary $B$ of \cite{MR2355782} the diffeomorphism type of $M$ is determined by the
  Pontryagin number $p_n^2(M)[M]\in \ZZ$. Note that the cohomology ring of $M$ is given by
  $\ZZ[x]/\langle x^3  \rangle$ with $x \in H^n(M)$, thus they have a natural orientation
  given by $x^2  = (-x)^2$. Hence the Pontryagin number is
  encoded in the GKM graph of $M$ by \cite{MR4088417}.
\end{proof}

\section{The Hamiltonian example}

We will construct a Hamiltonian $T^3$-action of GKM type on a space $Y$ which has the signed GKM graph of a diagonal action on $\mathbb{C}P^1\times \mathbb{C}P^3$, while $Y$ is not homotopy equivalent to the latter product.

\subsection{The construction}

The $T^3$-space $Y$ arises as an equivariant pullback of the bundle $X\rightarrow S^6$ along a map $k\colon \mathbb{C}P^3\rightarrow S^6$. Explicitly we define $k$ as the collapsing map $\mathbb{C}P^3\rightarrow \mathbb{C}P^3/\mathbb{C}P^2\cong S^6$, where $\mathbb{C}P^2$ is embedded as those points $[z_0:\ldots:z_3]\in \mathbb{C}P^3$ with $z_3=0$. The identification of the quotient space with $S^6$ can be done such that $k$ is equivariant with respect to the action \[(s,t,u)\cdot [z_0:z_1:z_2:z_3]=[sz_0:tz_1:uz_2:z_3]\] on $\mathbb{C}P^3$ and the standard $T^3$-action on $S^6\subset \mathbb{C}^3\oplus\mathbb{R}$. Explicitly, let $S^6_+=\{(v,w,z,h)\in S^6~|~ h\geq 0\}$ be the upper hemisphere and $A\subset S^6_+$ the equator. Then one has homeomorphisms \[\mathbb{C}P^3/\mathbb{C}P^2\leftarrow S^6_+/A\rightarrow S^6,\] where the first map is defined by $(v,w,z,h)\mapsto [v:w:z:h]$ and the second map is defined by stretching along the real coordinate as in the proof of Lemma \ref{lem:S5nachS4}.

We define $Y$ as the pullback of the $T^3$-equivariant bundle $X\to S^6$ along $k$, which is the same as the pullback of the $S^1$-quotient $\mathbb{C}P^3\to S^4$ of the quaternionic Hopf bundle $S^7\to S^4$ along $f\circ k$, where $f$ is defined in Proposition \ref{prop:S6nachS4}. Since $k$ maps the one-skeleton of $\mathbb{C}P^3$ to the one-skeleton of $S^6$ and the restriction of $f$ to the one-skeleton of $S^6$ was shown to be equivariantly homotopic to a constant map, the same holds for the composition $f\circ k$ when restricted to the one skeleton of $\mathbb{C}P^3$. Analogously to Theorem \ref{thm:non-rigidity} we obtain

\begin{thm}\label{thm:non-rigidity-symplectic}
The $1$-skeleton of the $T^3$-action on $Y$ is $T^3$-equivariantly homeomorphic to the $1$-skeleton of the product $T^3$-action on $\mathbb{C}P^1\times \mathbb{C}P^3$ which acts on $\mathbb{C}P^1$ via $(s,t,u)\cdot [v,w]=[st^{-1}u^{-1}v,w]$ and on $\mathbb{C}P^3$ via $(s,t,u)\cdot [z_0:z_1:z_2:z_3]=[sz_0:tz_1:uz_2:z_3]$.
\end{thm}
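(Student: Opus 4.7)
My plan is to mimic the proof of Theorem \ref{thm:non-rigidity} essentially verbatim, with $(S^6, f)$ replaced by $(\mathbb{C}P^3, f\circ k)$. Denote by $q\colon Y\to\mathbb{C}P^3$ the projection and by $A\subset \mathbb{C}P^3$ the 1-skeleton of the given $T^3$-action, namely the union of the six invariant $\mathbb{C}P^1$'s through the four standard fixed points. Since $q$ is $T^3$-equivariant, the 1-skeleton of $Y$ is contained in $q^{-1}(A)$, and by the pullback description of $Y$, naturality identifies $q^{-1}(A)$ with the pullback of the $\mathbb{C}P^1$-bundle $\mathbb{C}P^3\to S^4$ along $(f\circ k)|_A$.

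The key input is the observation from the paragraph immediately preceding the theorem: $(f\circ k)|_A$ is $T^3$-equivariantly homotopic to a constant map to some point $x\in (S^4)^{T^3}$. Combined with equivariant homotopy invariance of the pullback and Proposition \ref{prop:s1actionons4}, this yields a $T^3$-equivariant homeomorphism
\[
q^{-1}(A)\;\cong\; A\times F_x,
\]
with the diagonal action, where $F_x\cong \mathbb{C}P^1$ carries the rotation $(s,t,u)\cdot [v,w]=[st^{-1}u^{-1}v,w]$. Swapping factors and identifying $F_x$ with the $\mathbb{C}P^1$-factor on the product side provides a $T^3$-equivariant homeomorphism $A\times F_x \cong \mathbb{C}P^1\times A$ compatible with the diagonal actions.

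Finally, one restricts to the 1-skeleton: the 1-skeleton of $\mathbb{C}P^1\times \mathbb{C}P^3$ likewise lies in $\mathbb{C}P^1\times A$, and a short weight check ensures the two 1-skeleta correspond. Explicitly, the stabilizer of any edge of $A$ is the kernel of one of the weights $\pm(1,-1,0)$, $\pm(1,0,-1)$, $\pm(1,0,0)$, $\pm(0,1,-1)$, $\pm(0,1,0)$, $\pm(0,0,1)$, none of which is a multiple of $(1,-1,-1)$. Consequently the 1-skeleton of the diagonal $T^3$-action on $A\times F_x$ is exactly $A^{T^3}\times F_x \;\cup\; A\times (F_x)^{T^3}$, matching the analogous decomposition on $\mathbb{C}P^1\times A$. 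The only real obstacle I foresee is bookkeeping: the equivariant nullhomotopy of $(f\circ k)|_A$ is inherited from the equivariant nullhomotopy of $f$ restricted to the 1-skeleton of $S^6$ used in Theorem \ref{thm:non-rigidity} (pre-composed with the equivariant map $k$), and the identification of the fiber via Proposition \ref{prop:s1actionons4} is identical to the earlier case.
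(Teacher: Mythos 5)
Your proposal is correct and follows the paper's own route: the paper likewise observes that $k$ carries the one-skeleton of $\mathbb{C}P^3$ into that of $S^6$, so the equivariant nullhomotopy of $f$ on the one-skeleton gives one for $(f\circ k)|_A$, and then repeats the argument of Theorem \ref{thm:non-rigidity} (pullback naturality, equivariant homotopy invariance, Proposition \ref{prop:s1actionons4}) verbatim. Your explicit weight check identifying the one-skeleton of the diagonal action on $A\times F_x$ is just a spelled-out version of the bookkeeping the paper leaves implicit.
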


It remains to prove that $Y$ is not homotopy equivalent to the product $\mathbb{C}P^1\times \mathbb{C}P^3$.
While the strategy is largely analogous to that of Lemma \ref{lem:pi5}, the details are slightly more involved, drawing from classical results on homotopy groups of spheres.

\begin{lem}\label{lem:fancyhomotopystuff}
Let $g$ be the Hopf map $S^3\rightarrow S^2$ and $i\colon S^4\cong \mathbb{H}P^1\rightarrow \mathbb{H}P^\infty$ be the inclusion. Then $i\circ \Sigma^2 g\circ \Sigma^3 g\circ \Sigma^4 g$ defines a non-trivial element of $\pi_7(\mathbb{H}P^2)$.
\end{lem}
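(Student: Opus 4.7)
The plan is to reduce the nontriviality of $i \circ \Sigma^{2}g \circ \Sigma^{3}g \circ \Sigma^{4}g$ to a classical statement about homotopy groups of spheres, by exploiting the cellular structure $\mathbb{H}P^{2} = S^{4}\cup_{\nu_{H}}e^{8}$, where $\nu_{H}$ is the quaternionic Hopf map. First I would use the identity $\Sigma(\alpha\circ\beta)=\Sigma\alpha\circ\Sigma\beta$ to rewrite the composition as $i\circ \Sigma^{2}(g\circ\Sigma g\circ\Sigma^{2}g)=i_{*}(\Sigma^{2}\eta^{3})$, where $\eta=[g]\in\pi_{3}(S^{2})$ is the Hopf class and $\eta^{3}:=\eta\circ\Sigma\eta\circ\Sigma^{2}\eta\in\pi_{5}(S^{2})$.

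The long exact sequence of the pair $(\mathbb{H}P^{2},S^{4})$, together with $\pi_{k}(\mathbb{H}P^{2},S^{4})=0$ for $k\leq 7$ and $\pi_{8}(\mathbb{H}P^{2},S^{4})\cong\mathbb{Z}$ (generated by the characteristic map of the $8$-cell, whose boundary is $\nu_{H}$), identifies $\pi_{7}(\mathbb{H}P^{2})\cong \pi_{7}(S^{4})/\langle\nu_{H}\rangle$. Invoking the classical splitting $\pi_{7}(S^{4})\cong \mathbb{Z}\{\nu_{H}\}\oplus \mathbb{Z}/12$, with the $\mathbb{Z}$ summand split off by the Hopf invariant $H$ (so $H(\nu_{H})=1$), one sees that the quotient by $\langle\nu_H\rangle$ is canonically the torsion summand $\mathbb{Z}/12$, and that the projection sends any element of Hopf invariant zero to its own class.

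Now $\Sigma^{2}\eta^{3}$ is a suspension, hence $H(\Sigma^{2}\eta^{3})=0$, placing it in the torsion summand; it therefore suffices to show that $\Sigma^{2}\eta^{3}$ is nonzero in $\pi_{7}(S^{4})$ itself. Suspending once more lands in the stable range: $\Sigma^{3}\eta^{3}\in \pi_{8}(S^{5})\cong \pi_{3}^{s}\cong \mathbb{Z}/24$ is the classical element $\eta^{3}=12\nu$, the unique nonzero element of order $2$. Hence $\Sigma^{2}\eta^{3}$ already fails to vanish in $\pi_{7}(S^{4})$, and the claim follows.

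The main obstacle is precisely this last input, the stable nonvanishing of $\eta^{3}$, which is a classical but genuinely nontrivial fact that one is obliged to cite rather than reprove. A slightly more conceptual alternative packaging uses that the inclusion $\mathbb{H}P^{2}\hookrightarrow \mathbb{H}P^{\infty}=BS^{3}$ is $11$-connected (the next cell appears in dimension $12$), hence $\pi_{7}(\mathbb{H}P^{2})\cong \pi_{7}(BS^{3})\cong \pi_{6}(S^{3})\cong \mathbb{Z}/12$; under this identification $i_{*}(\Sigma^{2}\eta^{3})$ corresponds to the image of $\Sigma^{2}\eta^{3}$ under the connecting homomorphism of the Hopf fibration $S^{3}\to S^{7}\to S^{4}$, and the kernel of that homomorphism is $\mathbb{Z}\{\nu_H\}$, so the same stable input closes the argument.
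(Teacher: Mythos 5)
Your argument is correct, and it overlaps with the paper's proof in its final step while diverging in the key nonvanishing step. The identification of the kernel of $\pi_7(S^4)\to\pi_7(\mathbb{H}P^2)$ with the subgroup generated by the quaternionic Hopf map (via the long exact sequence of the pair and Blakers--Massey, equivalently the paper's appeal to homotopy excision) and the observation that your element cannot lie in that kernel (you use Hopf invariant zero for a suspension; the paper uses that the element is torsion while $\nu_H$ has infinite order -- these are the same mechanism) match the paper. Where you genuinely differ is in proving $\Sigma^2(\eta\circ\Sigma\eta\circ\Sigma^2\eta)\neq 0$ in $\pi_7(S^4)$: you suspend once more into the stable range $\pi_8(S^5)\cong\pi_3^s\cong\ZZ/24$ and cite the classical relation $\eta^3=12\nu\neq 0$, whereas the paper works unstably, first showing $g\circ\Sigma g\circ\Sigma^2 g$ generates $\pi_5(S^2)\cong\ZZ_2$ and then feeding this through two EHP sequences (the $2$-local one for the suspension $\pi_5(S^2)\to\pi_6(S^3)$ and the integral one for the odd sphere $S^3$, using $\pi_6(S^5)=\ZZ_2$, $\pi_6(S^3)=\ZZ_{12}$, $\pi_7(S^4)=\ZZ\oplus\ZZ_{12}$) to get injectivity of both suspensions. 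Your route is shorter but leans on a heavier black box (the stable $3$-stem computation and the value of $\eta^3$ there), while the paper's EHP argument is assembled from lower-dimensional unstable group computations and additionally yields the finer information that the relevant element is detected already at each suspension stage; your closing remark reducing everything to the connecting homomorphism of $S^3\to S^7\to S^4$ via the $11$-connected inclusion $\mathbb{H}P^2\hookrightarrow\mathbb{H}P^\infty$ is also a valid, and arguably cleaner, repackaging consistent with how the paper later uses the lemma.
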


\begin{proof}
As observed in Proposition \ref{prop:S6nachS4}, the map $\Sigma g\circ \Sigma^2 g$ is a generator of $\pi_5(S^3)$. In particular it induces a non-trivial map $\pi_5(S^5)\rightarrow \pi_5(S^3)$. Since $g$ is the projection of an $S^1$-bundle, it induces isomorphisms on higher homotopy groups, so the composition $g\circ \Sigma g\circ \Sigma^2g$ induces a non-trivial map on $\pi_5$. Hence it is a generator of $\pi_5(S^2)$.

The next step is to show that its two-fold suspension $\Sigma^2 g\circ \Sigma^3 g\circ \Sigma^4 g$
  is still non-trivial. This follows from the EHP-sequence \cite{MR83124} (or \cite{MR1867354} for
  a modern exposition)
  which involves the suspension homomorphism or rather its localization at $2$ in a long exact
  sequence. The part of the sequence we are interested in reads
\[
  \cdots\rightarrow \pi_5(S^2)_{(2)}\xrightarrow{E_{(2)}}\pi_6{(S^3)}_{(2)}\rightarrow \pi_6(S^5)_{(2)}\rightarrow \pi_4(S^2)_{(2)}\xrightarrow{E_{(2)}} \pi_5(S^3)_{(2)}\rightarrow\cdots
\]
where $E$ denotes the suspension homomorphism and the index $(2)$ denotes localization at $2$.
Using that $\pi_5(S^2)=\mathbb{Z}_2$ (cf. \cite{MR37507}) and $\pi_6(S^5)=\ZZ_2$ by the
  Freudenthal suspension theorem as well as $\pi_6(S^3)=\mathbb{Z}_{12}$ (cf. \cite{MR0143217}),
  the left part of the sequence becomes
\[
  \cdots\rightarrow\mathbb{Z}_2\xrightarrow{E_{(2)}} \mathbb{Z}_4\rightarrow \mathbb{Z}_2\rightarrow 0
\]
 since $E\colon \pi_4(S^2)\rightarrow \pi_5(S^3)$ is an isomorphism (as argued in section \ref{sec:construction}, the two groups are $\mathbb{Z}_2$ and generated by $g\circ \Sigma g$ and $\Sigma g\circ\Sigma^2 g$ respectively). Thus $E\colon \pi_5(S^2)\rightarrow \pi_6(S^3)$ is injective because this holds for the localized map and $\pi_5(S^2)\rightarrow\pi_5(S^2)_{(2)}$ is an isomorphism. An analogous sequence exists for the suspension $E$ on $S^3$. For odd spheres, no localization is necessary and the relevant part of the sequence reads
\[\cdots\rightarrow\pi_6(S^3)\xrightarrow{E} \pi_7(S^4)\rightarrow \pi_7(S^7)\rightarrow\cdots\]
which is $\mathbb{Z}_{12}\rightarrow \mathbb{Z}_{12}\oplus \mathbb{Z}\rightarrow\mathbb{Z}$. Consequently the left hand map is necessarily injective. In total we deduce that the map $\Sigma^2 g\circ \Sigma^3 g\circ \Sigma^4 g$, which is the double suspension of the generator  of $\pi_5(S^2)$, defines a non-trivial element of order $2$ in $\pi_7(S^4)$.

It remains to prove that it survives the inclusion into $\mathbb{H}P^\infty$. It suffices to
consider the inclusion into the $8$-skeleton $\mathbb{H}P^2$. The latter arises from $S^4$ by
attaching a single $8$-cell via the projection in of the Hopf fibration $g'\colon
S^7\rightarrow S^4$. By homotopy excision the kernel of
$\pi_7(S^4)\rightarrow\pi_7(\mathbb{H}P^2)$ is generated by $g'$. From the long homotopy sequence
of $S^3\rightarrow S^7\rightarrow S^4$ we deduce that $[g']\in\pi_7(S^4)$ is of infinite order.
This implies that the class of $\Sigma^2 g\circ \Sigma^3 g\circ \Sigma^4 g$, which is torsion, is
not contained in the kernel of $\pi_7(S^4)\rightarrow \pi_7(\mathbb{H}P^2)$.
\end{proof}

\begin{prop}
We have $\pi_6(Y)= \mathbb{Z}_6$ whereas $\pi_6(\mathbb{C}P^1\times \mathbb{C}P^3)=\mathbb{Z}_{12}$.
\end{prop}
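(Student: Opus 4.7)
The plan is to compute $\pi_6$ of both spaces and observe they differ. For the product, the principal $S^1$-bundle $S^1\to S^7\to \mathbb{C}P^3$ gives $\pi_n(\mathbb{C}P^3)\cong \pi_n(S^7)$ for $n\geq 3$; in particular $\pi_6(\mathbb{C}P^3)=0$, so the product formula yields $\pi_6(\mathbb{C}P^1\times \mathbb{C}P^3) = \pi_6(S^2) = \mathbb{Z}_{12}$.

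For $Y$ I would mimic the argument of Lemma \ref{lem:pi5}. Write $Q\to \mathbb{C}P^3$ for the principal $\SU(2)$-bundle pulled back from the quaternionic Hopf bundle $S^7\to S^4$ along $f\circ k$, so $Y=Q/S^1$. The fibration $S^1 \to Q \to Y$ gives $\pi_6(Y)\cong \pi_6(Q)$ since $\pi_5(S^1)=\pi_6(S^1)=0$, and the LES of $\SU(2) \to Q \to \mathbb{C}P^3$ reads
\[\mathbb{Z} = \pi_7(\mathbb{C}P^3) \xrightarrow{\partial_Q} \pi_6(\SU(2)) = \mathbb{Z}_{12} \to \pi_6(Q) \to \pi_6(\mathbb{C}P^3) = 0,\]
reducing the problem to identifying $\mathrm{im}(\partial_Q)$. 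By naturality under the classifying map $\varphi = i\circ f\circ k\colon \mathbb{C}P^3\to S^4 \hookrightarrow \mathbb{H}P^\infty$, one has
\[\partial_Q = \delta \circ f_* \circ k_*\colon \pi_7(\mathbb{C}P^3)\xrightarrow{k_*}\pi_7(S^6) \xrightarrow{f_*}\pi_7(S^4)\xrightarrow{\delta} \pi_6(\SU(2)),\]
where $\delta$ is the connecting map of the Hopf fibration $S^3\to S^7\to S^4$.

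The generator of $\pi_7(\mathbb{C}P^3)$ is represented by the Hopf bundle $h\colon S^7\to \mathbb{C}P^3$, and I would trace its image in three stages. First, I would show that $k_*[h] \in \pi_7(S^6)=\mathbb{Z}_2$ is nontrivial: since $h$ is the attaching map of the $8$-cell of $\mathbb{C}P^4$, collapsing $\mathbb{C}P^2\subset \mathbb{C}P^3 \subset \mathbb{C}P^4$ identifies $\mathbb{C}P^4/\mathbb{C}P^2$ with $S^6\cup_{k\circ h}D^8$; were $k\circ h$ null-homotopic this would be $S^6\vee S^8$, contradicting the fact that $Sq^2\colon H^6\to H^8$ of $\mathbb{C}P^4/\mathbb{C}P^2$ with $\mathbb{Z}/2$-coefficients is nontrivial (inherited from $Sq^2(x^3)=x^4$ in $\mathbb{C}P^4$). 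So $k_*[h]=\Sigma^4 g$. Second, using $f=\Sigma^2 g\circ \Sigma^3 g$ from Proposition \ref{prop:S6nachS4}, we get $f_*[\Sigma^4 g]=[\Sigma^2 g\circ \Sigma^3 g\circ \Sigma^4 g]$, which is the double suspension of the generator of $\pi_5(S^2)$; the proof of Lemma \ref{lem:fancyhomotopystuff} identifies this as the unique order-$2$ class in the $\mathbb{Z}_{12}$-torsion summand of $\pi_7(S^4)=\mathbb{Z}\oplus \mathbb{Z}_{12}$. Third, $\delta$ vanishes on the $\mathbb{Z}$-Hopf summand and restricts to an isomorphism onto $\pi_6(S^3)=\mathbb{Z}_{12}$, sending that order-$2$ element to $6\in \mathbb{Z}_{12}$. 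Hence $\mathrm{im}(\partial_Q)=\langle 6\rangle\cong \mathbb{Z}_2$ and $\pi_6(Y)=\mathbb{Z}_{12}/\langle 6\rangle = \mathbb{Z}_6$.

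The main obstacle is the first stage, namely identifying the class of $k\circ h$ as the nontrivial element of $\pi_7(S^6)$; the remaining steps are essentially bookkeeping with facts already established in the excerpt. The $Sq^2$-computation via the attaching map of the $8$-cell of $\mathbb{C}P^4/\mathbb{C}P^2$ appears to be the cleanest route, though one could also attempt to describe $k\circ h$ explicitly as a suspension of the Hopf map.
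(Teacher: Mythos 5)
Your argument is correct and is essentially the paper's own proof: the same reduction $\pi_6(Y)\cong\pi_6(Q)$ via the $S^1$-bundle $S^1\to Q\to Y$, the same long exact sequence of the pulled-back $\SU(2)$-bundle over $\CC P^3$, and the same identification of the image of the generator of $\pi_7(\CC P^3)$ as the order-two class $\Sigma^2 g\circ\Sigma^3 g\circ\Sigma^4 g$. The only differences are cosmetic: where the paper quotes \cite[Lemma 9.2]{MR270373} for $k\circ h\simeq\Sigma^4 g$ you give a self-contained $Sq^2$-argument on $\CC P^4/\CC P^2$, and where the paper routes the last step through the isomorphism $\pi_7(\mathbb{H}P^\infty)\cong\pi_6(\SU(2))$ together with Lemma \ref{lem:fancyhomotopystuff}, you apply the connecting homomorphism of $S^3\to S^7\to S^4$ directly --- the decisive point, that the order-two torsion class is not killed because the kernel is generated by the infinite-order Hopf class, is exactly the content of the final paragraph of that lemma's proof.
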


\begin{proof}
Denote by $Q$ the pullback of the universal $\SU(2)$-bundle along the map $ i\circ f\circ k\colon \mathbb{C}P^3\rightarrow \mathbb{H}P^\infty$, where $i,f,k$ are as above. We have a principal $S^1$-bundle $S^1\rightarrow Q\rightarrow Y$ so it suffices to compute the homotopy groups of $Q$. There is a pullback diagram

\[\xymatrix{
\SU(2)\ar[r]\ar[d] &Q\ar[r]\ar[d] & \mathbb{C}P^3\ar[d]^{i\circ f\circ k}\\
\SU(2)\ar[r] & S^\infty \ar[r]& \mathbb{H}P^\infty
}\]
which induces a commutative diagram

\[\xymatrix{\cdots\ar[r]& \pi_7(\mathbb{C}P^3)\ar[r]\ar[d]^{(i\circ f\circ k)_*}& \pi_6(\SU(2)) \ar[r]\ar[d]^{\mathrm{id}}& \pi_6(Q)\ar[r]\ar[d]& 0\ar[r]\ar[d]& \cdots\\
\cdots\ar[r]& \pi_7(\mathbb{H}P^\infty)\ar[r] & \pi_6(\SU(2))\ar[r]& 0 \ar[r]& \pi_6(\mathbb{H}P^\infty)\ar[r]&\cdots}\]
on long exact homotopy sequences. The group $\pi_7(\mathbb{C}P^3)$ is generated by the canonical
  projection $p\colon S^7\rightarrow \mathbb{C}P^3$. By \cite[Lemma 9.2]{MR270373} the composition
  $k\circ p$ is homotopic to the quadruple suspended Hopf map $\Sigma^4 g$.
   We have $f = \Sigma^2 g\circ \Sigma^3 g$ (cf. proof of Proposition \ref{prop:S6nachS4}) and thus by Lemma
   \ref{lem:fancyhomotopystuff} we conclude that $(i\circ f\circ k)_*([p])=[i\circ \Sigma^2 g\circ
   \Sigma^3 g\circ \Sigma^4 g]$ is a non-trivial element of order $2$ in
   $\pi_7(\mathbb{H}P^\infty)$. Using the fact that
   $\pi_7(\mathbb{H}P^\infty)\rightarrow\pi_6(\SU(2))$ is an isomorphism and
   $\pi_6(\SU(2))=\mathbb{Z}_{12}$ we see that $\pi_6(Y)=\mathbb{Z}_{6}$.
\end{proof}

\subsection{Symplectic structures}\label{sec:symplecticstructures}

Recall that the space $Y$ was defined as the total space of the pullback of an equivariant $\mathbb{C}P^1$-fiber bundle $\mathbb{C}P^3\to S^4$ along $f\circ k:\mathbb{C}P^3\to S^6\to S^4$. Let $\pi \colon Y \to \CC P^{3}$ be the projection map, then by
Thurston's construction \cite{MR0402764}, see also \cite[Theorem 6.1.4]{MR3674984}, $Y$ admits a sympletic structure $\omega
= \omega_F+C\pi^\ast(\omega_B)$, where
$\omega_B$ is the Fubini-Study form on $\CC P^{3}$, $\omega_F$ is a closed $2$-form which
restricts to the Fubini-Study form on the fibers (after identifying them with $\CC P^1$ up to elements of the structure group) and $C>0$ large enough. Note that
Thurston's construction is applicable since the Fubini-Study form on $\CC P^{1}$ is invariant
under the structure group of $\pi$ (cf. Remark \ref{rem:structure group}) and the inclusion of a fiber is surjective on cohomology due to the fact that the odd-dimensional cohomology of fiber and base vanishes.
As argued in Section \ref{subsec:acs}, the $T^3$-action on $Y$ preserves the fibers of $\pi$, with the transformations between fibers corresponding to elements of the structure group. The restriction of $\omega_F$ to any fiber is invariant under the structure group.
Thus if we average $\omega_F$ over
$T^3$ we obtain a $T^3$-invariant closed $2$-form $\widetilde\omega_F$ whose restriction to any fiber agrees with that of $\omega_F$.
Clearly $\omega_B$ is also invariant with respect to the
$T^3$-actions, hence $\widetilde \omega = \widetilde\omega_F + C\pi^\ast(\omega_B)$ is
a $T^3$-invariant symplectic form (potentially replacing $C$ by a larger constant) on $Y$. In this
way, a nonempty open set in the second cohomology of $Y$ can be realized as cohomology classes of
$T^3$-invariant symplectic forms on $Y$. As $Y$ is simply-connected, the $T^3$-action on $Y$ is
automatically Hamiltonian with respect to any of these symplectic forms.


\begin{rem}
It is possible to write the fiber bundle $\CC P^{3} \to S^4$ as the projectivization of
the vector bundle $E:=S^7 \times_{\SU(2)} \mathbb{C}^2 \to S^4$, where $\SU(2)$ acts on
$\mathbb{C}^2$ via the standard representation. Thus, the space $Y$ is the projectivization of
the $T^3$-equivariant complex vector bundle $(f\circ k)^*E\to \CC P^3$.
By \cite[Chapter I, \S 6]{MR2815674}, any complex vector bundle over $\CC P^3$ admits a holomorphic structure, which may be no longer equivariant, even if the original bundle was equivariant. Consequently, by \cite[Proposition 3.18]{MR2451566} its projectivization admits a K\"ahler structure. It follows that the space $Y$ is (nonequivariantly) K\"ahler. We do not know if it admits a $T^3$-invariant K\"ahler structure.
\end{rem}
In this section, we compare the symplectic structures on $Y$ and $\CC P^1\times \CC P^3$ in regard to two aspects: their x-ray and the symplectic cohomological rigidity problem \cite{2002.12434v1}.

We recall the notion of the x-ray of a Hamiltonian action of a torus $T$ on a manifold $M$ with momentum map $\mu\colon M\rightarrow \mathfrak{t}^*$: it is given by the datum of the closed orbit type stratification $\chi$ (as a partially ordered set) which consists of the connected components of all submanifolds $M^H\subset M$, where $H\subset T$ is a subgroup, together with the function that associates to $N\in \chi$ the polytope $\mu(N)\subset \mathfrak{t}^*$. Thus in the GKM case it encodes in particular the (signed) GKM graph (associated to an almost complex structure which is compatible with the symplectic form). However, the information on the lengths of the edges is lost, when passing from the x-ray to the GKM graph. We have the following addendum to Theorem \ref{thm:non-rigidity-symplectic}:

\begin{prop}
On $Y$ and $\mathbb{C}P^1\times\mathbb{C}P^3$ there exist $T^3$-invariant symplectic forms with momentum maps whose x-rays coincide.
\end{prop}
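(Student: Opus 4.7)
My plan is to reduce x-ray matching to matching symplectic areas of invariant 2-spheres.

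The x-ray consists of the closed orbit type poset $\chi$ and the assignment $N\mapsto \mu(N)\subset\mathfrak{t}^*$. By Lemma~\ref{lem:stratification}, $\chi$ is determined by the GKM graph, so Theorem~\ref{thm:non-rigidity-symplectic} yields an isomorphism $\chi_Y\cong \chi_{\CC P^1\times \CC P^3}$ of posets. For each $N\in\chi$, viewed as a component of $M^H$ with residual Hamiltonian $T^3/H$-action, the Atiyah--Guillemin--Sternberg convexity theorem gives $\mu(N)=\mathrm{conv}(\mu(N\cap M^{T^3}))$. Thus matching the momentum values at $T^3$-fixed points along the GKM bijection matches the polytopes. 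Since the weights at edges are encoded in the GKM graph, the momentum values are determined, up to a global translation, by the $16$ symplectic areas $\int_S\omega$ of the invariant 2-spheres.

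On $\CC P^1\times \CC P^3$ with form $a\omega_1+b\omega_3$, these areas are $a$ for fiber spheres and $b$ for horizontal spheres. On $Y$, fibers are pairwise homologous with a common area $A$, and I claim the $12$ horizontal spheres are pairwise homologous as well, with common area $B$. For this I would use that the pullback bundle $(f\circ k)^*E$ has $c_2=0$: the map $f\circ k\colon\CC P^3\to S^6\to S^4$ factors through $S^6$ and $H^4(S^6)=0$, so the induced map $(f\circ k)^*\colon H^4(S^4)\to H^4(\CC P^3)$ vanishes. Hence $(f\circ k)^*E$ restricts trivially to each $T^3$-invariant $\CC P^2\subset\CC P^3$ (rank $2$ complex vector bundles over $\CC P^2$ are classified by $(c_1,c_2)$), so $Y$ restricts to a trivial $\CC P^1$-bundle over each invariant $\CC P^2$. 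Within such a trivialization, horizontal invariant sections over two lines sharing the $\CC P^2$ agree in $H_2$ modulo the fiber class $[F]$; the $T^3$-equivariant trivialization of $Y$ over the 1-skeleton of $\CC P^3$ from the proof of Theorem~\ref{thm:non-rigidity-symplectic} pins down this ambiguity. Iterating over the connected adjacency graph of invariant 2-spheres in $\CC P^3$ then yields a common class globally.

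Thurston's construction produces $T^3$-invariant symplectic forms $\widetilde\omega_F+C\pi^*\omega_B$ on $Y$ realizing any $(A,B)$ with $A>0$ and $B$ sufficiently large: $A$ is encoded in $[\omega_F]|_{\text{fiber}}$, and replacing $\omega_F$ by $\omega_F+\lambda\pi^*\omega_B$ adjusts $B$ independently of $A$ while $C$ can be taken large enough to ensure the form is symplectic. Matching $(A,B)=(a,b)$ for $a,b$ sufficiently large produces $T^3$-invariant symplectic forms on both manifolds with identical invariant-sphere areas; after a global translation of one momentum map the values at corresponding fixed points agree, so the x-rays coincide. The main obstacle is the homological step establishing that all horizontal invariant 2-spheres on $Y$ represent a single class in $H_2(Y)$; once controlled via the $c_2$-vanishing argument above, the Thurston construction provides the remaining flexibility.
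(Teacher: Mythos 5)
Your reduction of the x-ray to the momentum values at the fixed points (via Lemma \ref{lem:stratification} and convexity) matches the paper's, but your route to matching those values is genuinely different: you pin down the edge lengths homologically, by showing each type of invariant sphere represents a single class in $H_2(Y)$, whereas the paper derives the same constancy combinatorially, from the fact that a momentum map must close up around the triangles of the $\CC P^3$-graph (the weights force all horizontal segments to share one scale, and the squares then force a common scale for the vertical ones). Your homological lemma is true and the computation $c_1=c_2=0$ for $(f\circ k)^*E$ is correct, but the step where the equivariant trivialization over the one-skeleton ``pins down the ambiguity'' needs to be said explicitly: the trivialization of $Y$ over the one-skeleton of an invariant $\CC P^2$ coming from Theorem \ref{thm:non-rigidity-symplectic} and the one coming from triviality of $(f\circ k)^*E|_{\CC P^2}$ differ by a map from a $2$-complex into $\SU(2)$, which is null-homotopic because $\pi_1(\SU(2))=\pi_2(\SU(2))=0$; only then are your invariant sections constant, hence of class $[\ell\times \mathrm{pt}]$, in the $\CC P^2$-trivialization, and only then is the degree-$d$ twisting you mention actually excluded.

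The genuine gap is elsewhere: the unsigned GKM graph together with the $16$ areas does \emph{not} determine the momentum map up to translation. Each momentum difference is $\mu(q)-\mu(p)=c_e\,\alpha_e$, where $\alpha_e$ is the weight of the tangent line of the sphere at $p$ taken \emph{with respect to the symplectic orientation}; the GKM graph gives $\alpha_e$ only up to sign, so equal areas still leave a sign ambiguity on every edge, and your conclusion that ``after a global translation the values at corresponding fixed points agree'' does not follow from area matching alone. To close this you must either compute the orientations --- the paper shows that $\pi$ is orientation-preserving on horizontal spheres once $C$ is large, identifies $\mu|_{A_h^\pm}$ with a rescaled $\mu_B\circ\pi$, and flips the vertical sign if necessary by replacing $\widetilde\omega_F$ with $-\widetilde\omega_F$ --- or argue that the cocycle condition on a momentum map admits only the four sign configurations given by a global horizontal and a global vertical sign, all of which are realizable on $\CC P^1\times\CC P^3$ by changing the signs of the two factor forms. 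Either way this is a necessary step, not a routine one, and it is where most of the content of the paper's own proof lies.
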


\begin{proof}
We have seen already that the one-skeletons of the two actions are equivariantly homeo\-morphic. It suffices to prove the existence of respective momentum maps which agree on the one skeleton with respect to this homeomorphism: by Lemma \ref{lem:stratification}, $\chi$ is encoded in the GKM graph. For some $N\in \chi$, the momentum image $\mu(N)$ is determined by the image under $\mu$ of all fixed points that are contained in $N$. These are however also determined by the corresponding subgraph.

In order to prove that there are momentum maps on $Y$ and $\mathbb{C}P^1\times\mathbb{C}P^3$ which coincide on the one-skeleton (with respect to a fixed homeomorphism), we start by investigating a momentum map $\mu$ of the symplectic form $\omega$ on $Y$ as constructed above. For a fixed $\omega$, $\mu$ is unique up to translation by some element in $\mathfrak{t}^*$. Let $A$ be the one skeleton of the action on $Y$ and $B$ be the one skeleton of $\mathbb{C}P^3$.
Every invariant $2$-sphere in $A$ gets mapped by $\mu$ to an affine linear segment in $\mathfrak{t}^*$. The slope of this segment, when moving from a fixed point $p$ in a sphere to the other fixed point $q$, is determined up to sign by the weight of the sphere in $\mathfrak{t}^*/\pm$. The sign is determined by the orientation on the sphere which is induced by $\omega$. The projection $\pi\colon Y\rightarrow \mathbb{C}P^3$ maps some spheres in $A$ homeomorphically onto invariant spheres in $\CC P^3$ (we call those horizontal) and is constant on the remaining spheres which are precisely the fibers over the fixed points of $\mathbb{C}P^3$ (we call those vertical). From the construction of $\omega$ we see that $\pi$ is not necessarily a symplectomorphism on horizontal spheres (where spheres in $\mathbb{C}P^3$ are equipped with the restriction of $\omega_B$). However if the constant $C$ is large enough, at least $\pi$ is orientation preserving on horizontal spheres.

By Theorem \ref{thm:non-rigidity-symplectic}, the subspace of all horizontal spheres in $A$ decomposes into two connected components $A_h^+$ and $A_h^-$, each of which gets homeomorphically mapped to $B$ in equivariant and sphere-wise orientation preserving fashion. From the specific weights and orientations we deduce that $\mu|_{A_h^\pm}$ has to agree with $\mu_B\circ \pi$ up to translation and rescaling, where $\mu_B$ is a momentum map for $\omega_B$. Now the vertical spheres get mapped to parallel line segments of slope $\pm(1,-1,-1)$ in the standard basis of $\mathfrak{t}^*$. From the previous considerations we deduce that the signs of the slope must agree when moving on vertical spheres from $A_h^+$ to $A_h^-$. Also this forces $\mu|_{A_h^+}$ and $\mu|_{A_h^-}$ to be scaled in the same way and implies that all the lengths of the segments that are the images of vertical spheres under $\mu$ agree.

To sum up the above discussion, $\mu|_A$ is determined up to translation, global rescaling as well as the length and sign of the vertical edges. The same considerations apply not only to $Y\rightarrow \mathbb{C}P^3$ but also to the trivial bundle $\mathbb{C}P^1\times\mathbb{C}P^3\rightarrow \mathbb{C}P^3$. Thus it remains to show that the above parameters can be manipulated in a way that the momentum maps agree on the one skeleton. Translation can be manipulated arbitrarily and global rescaling of an associated momentum map is achieved by rescaling the symplectic form $\omega$. The sign of $\mu$ on vertical edges can be changed by replacing $\omega=\tilde{\omega}_F+C\omega_B$ with $-\tilde{\omega}_F+C\omega_B$. Finally the length of the vertical edges is the only thing that can not be manipulated freely. However we can make it arbitrarily short when compared to the basic edges by enlarging the constant $C$. Thus we can change the symplectic forms to make the momentum maps agree on the one-skeleton.
\end{proof}

We wish to demonstrate that the pair of $Y$ and $\CC P^1\times \CC P^3$ are a counterexample to the symplectic cohomological rigidity problem for integer GKM manifolds. The symplectic variant of the cohomological rigidity problem, as posed in \cite{2002.12434v1}, asks for families of symplectic manifolds that are distinguished by their cohomology ring and the cohomology classes of their symplectic structures.
\begin{prop}\label{prop:nosymplecticrigidity}
On $Y$ and $\CC P^1\times \CC P^3$ there exist symplectic forms that are intertwined via the isomorphism on cohomology induced by the equivariant isomorphism of the respective one-skeleta from Theorem \ref{thm:non-rigidity-symplectic}.
\end{prop}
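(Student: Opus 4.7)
The plan is to leverage the previous proposition together with the GKM description of equivariant cohomology to bootstrap the agreement on one-skeleta to agreement of cohomology classes of the full symplectic forms.

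First I would invoke the preceding proposition to select $T^3$-invariant symplectic forms $\omega_Y$ on $Y$ and $\omega_0$ on $\mathbb{C}P^1\times\mathbb{C}P^3$ together with momentum maps $\mu_Y$, $\mu_0$ whose x-rays coincide under the equivariant homeomorphism $\varphi$ of the one-skeleta provided by Theorem \ref{thm:non-rigidity-symplectic}. In particular the values of $\mu_Y$ and $\mu_0$ at corresponding fixed points agree. I would then pass to the equivariantly closed extensions $\omega_Y-\mu_Y$ and $\omega_0-\mu_0$ in the Cartan model, which represent classes in $H^2_{T^3}(Y)$ and $H^2_{T^3}(\mathbb{C}P^1\times \mathbb{C}P^3)$ respectively, and whose restrictions to any fixed point are simply $-\mu_Y(p)$ and $-\mu_0(p)$.

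Next I would appeal to GKM theory: since both manifolds have vanishing odd integer cohomology and a GKM $T^3$-action, the restriction map to the fixed point set is injective on equivariant cohomology, and its image is characterized by the GKM divisibility conditions attached to the edges of the one-skeleton (\cite[Corollary 2.2]{MR2790824}). Consequently the equivariant homeomorphism $\varphi$, which by construction preserves fixed points and invariant $2$-spheres together with their weights, induces an isomorphism
\[
\Phi\colon H^*_{T^3}(\mathbb{C}P^1\times\mathbb{C}P^3)\longrightarrow H^*_{T^3}(Y)
\]
of $H^*(BT^3)$-algebras. Since the momentum maps agree on fixed points under $\varphi$, the two equivariant classes $[\omega_Y-\mu_Y]$ and $[\omega_0-\mu_0]$ have identical restrictions to the fixed point sets, and injectivity of the GKM restriction forces $\Phi([\omega_0-\mu_0])=[\omega_Y-\mu_Y]$.

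Finally, because both manifolds have vanishing odd cohomology, the forgetful map $H^*_{T^3}(M)\to H^*(M)$ is surjective with kernel generated by $H^{>0}(BT^3)$, and $\Phi$, being $H^*(BT^3)$-linear, descends to the isomorphism $\overline\Phi\colon H^*(\mathbb{C}P^1\times\mathbb{C}P^3)\to H^*(Y)$ on ordinary cohomology that is induced by $\varphi$ in the statement. Applying this descent to the equality $\Phi([\omega_0-\mu_0])=[\omega_Y-\mu_Y]$ yields $\overline\Phi([\omega_0])=[\omega_Y]$, as required. The main conceptual step is the identification of $\Phi$ with an isomorphism induced on cohomology from $\varphi$ alone; this is precisely where the GKM framework (and the connected stabilizer condition) is doing the work, since the equivariant cohomology of either space is entirely encoded by the labelled one-skeleton.
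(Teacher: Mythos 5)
Your argument is correct, but it is a genuinely different (and stronger) route than the one the paper takes. The paper's own proof is a two-line soft argument: the Thurston construction (with the freedom in the constant $C$, the sign of $\widetilde\omega_F$, and overall scaling) realizes a nonempty open subset of $H^2(Y;\RR)$ by invariant symplectic forms, while on $\CC P^1\times\CC P^3$ an open and dense subset of $H^2$ is realized by (products of rescaled Fubini--Study) forms; since the induced map on $H^2$ is a linear isomorphism, the two realized sets must meet, and the assertion is immediate --- no matching of momentum maps or equivariant classes is needed. You instead bootstrap the x-ray proposition: choosing forms and momentum maps as in its proof (note you need the proof, which arranges the momentum maps to agree on the one-skeleta with respect to the fixed homeomorphism $\varphi$, not merely the abstract statement that the x-rays coincide), you pass to the equivariantly closed extensions $\omega\pm\mu$, use injectivity of restriction to the fixed point set (valid over $\RR$ by equivariant formality, which is the right coefficient ring here since symplectic classes are real) to identify the equivariant classes under the GKM-induced isomorphism $\Phi$, and then descend along the forgetful map. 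This is precisely the content of the paper's closing remark (``even more is true\dots''), i.e.\ you prove the failure of \emph{equivariant} symplectic cohomological rigidity, which implies the proposition; the price is the heavier input (GKM description of $H^*_{T^3}$, Cartan model, the detailed x-ray construction), whereas the paper's openness/density argument buys the nonequivariant statement essentially for free. Your identification of the descended map $\overline\Phi$ with ``the isomorphism induced by the equivariant isomorphism of the one-skeleta'' is the intended reading, since that isomorphism is defined exactly through GKM theory and descent.
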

\begin{proof}
We observed at the beginning of the section that an open subset of the second cohomology of $Y$ is realized by symplectic structures on $Y$; on $\CC P^1\times \CC P^3$ the same is true for an open and dense subset of the second cohomology group. The assertion is immediate.
\end{proof}
We observe that even more is true: a closed equivariant extension of the symplectic form $\omega$ on a symplectic manifold $M$ with Hamiltonian $T$-action is an equivariant differential form of the form $\omega + \mu$, where $\mu$ is a momentum map of the $T$-action, see \cite[Example 4.16]{MR4025581}. One can choose appropriate momentum maps on $Y$ and $\CC P^1\times \CC P^3$ such that the isomorphism $H^2_{T^3}(Y) \cong H^2_{T^3}(\CC P^1\times \CC P^3)$ induced by the homeomorphism from Theorem \ref{thm:non-rigidity-symplectic} intertwines the corresponding equivariant cohomology classes. In other words, these examples are not equivariantly symplectically cohomologically rigid.

\bibliography{dim8GKMv3}
\bibliographystyle{acm}
\end{document}